\newtheorem{theorem}{Theorem}
\newtheorem{lemma}{Lemma}
\newtheorem{prop}{Proposition}
\newtheorem{cor}{Corollary}
\newtheorem{rem}{Remark}
\newcommand{\ftz}{\footnotesize}
\newcommand{\frct}[2]
{\ensuremath{\raisebox{.3ex}{\ftz #1}\!/\!\raisebox{-0.3ex}{\ftz#2}}}
\newcommand{\F}{\mathbb{F}}
\newcommand{\G}{\mathbb{G}}
\newcommand{\Prob}{\mathbb{P}}
\newcommand{\given}{\, | \,}
\newcommand{\Biggiven}{\, \Big{|} \,}
\newcommand{\wigB}{\mathcal{B}}
\newcommand{\wigW}{\mathcal{W}}
\newcommand{\wigR}{\mathcal{R}}
\newcommand{\wigC}{\mathcal{C}}
\newcommand{\Expe}{\mathbb{E}}
\newcommand{\Var}{\mathbb{V}{\rm ar}}
\newcommand{\indicator}{\mathbb{I}}
\newcommand{\covP}{\overset{P}{\longrightarrow}}
\newcommand{\almostsure}{\overset{a.s.}{\longrightarrow}}
\newcommand{\covD}{\overset{D}{\longrightarrow}}
\newcommand{\normal}{\mathcal{N}}
\newcommand{\vecM}{{\bf M}}
\newcommand{\vecX}{{\bf X}}
\newcommand{\vecR}{{\bf R}}
\newcommand{\mSigma}{{\bf \Sigma}}
\newcommand{\maP}{{\bf P}}
\newcommand{\mQ}{{\bf Q}}
\newcommand{\mK}{{\bf K}}
\newcommand{\mA}{{\bf A}}
\newcommand{\mDelta}{{\bf \Delta}}
\newcommand{\Cov}{\mathbb{C}{\rm ov}}
\newcommand{\polya}{P\'{o}lya}
\begin{document}
\begin{center}
	{\Large \bf  
		On nodes of small degrees and degree profile in preferential dynamic attachment circuits}
	
	\bigskip
	{\large \bf Panpan Zhang\footnote{\label{note1}Department of Statistics,
			University of Connecticut, Storrs, CT 06269, U.S.A.} \quad and \quad Hosam M. Mahmoud\footnote{\label{note2}Department of Statistics, The George Washington University, Washington, D.C. 20052, U.S.A.}}
	
	\bigskip
	\bf{\today}
\end{center}

\bigskip\noindent
\begin{center}
{\bf Abstract}
\end{center}
	We investigate the joint distribution of nodes of small degrees and the degree profile in preferential dynamic attachment circuits. In particular, we study the joint asymptotic distribution of the number of the nodes of outdegree $0$ (terminal nodes) and outdegree $1$ in a very large circuit. The expectation and variance of the number of those two types of nodes are both asymptotically linear with respect to the age of the circuit. We show that the numbers of nodes of outdegree $0$ and 1$1$ asymptotically follow a two-dimensional Gaussian law via multivariate martingale methods. We also study the exact distribution of the degree of a node, as the circuit ages, via a series of  \polya-Eggenberger urn models with ``hiccups'' in between. The exact expectation and variance of the degree of nodes are determined by recurrence methods. Phase transitions of these degrees are discussed briefly. This is an extension of the abstract~\cite{ZhangANALCO}.
	
	{\bf Key words:} complex network, combinatorial probabilities, degree profile \and multivariate martingale, \polya\ urn, preferential attachment, random circuit, stochastic recurrence

\section{Introduction}
Networks are proliferating all around us. They appear in many forms, such as hardwired,  amorphous cyber and virtual constructs, routes on navigation and trading maps, etc. There is need for models and analysis of networks. In this article, we take up a kind of network that has recently received attention, the {\em preferential attachment circuit}. These are circuits (networks) that grow with newcomers favoring to attach themselves to nodes of higher degrees in the circuit, a manifestation of two principles in social science (e.g.,~\cite{Merton}): ``the rich get richer'' and ``success breeds success.''

In this research, we discuss two properties in preferential attachment circuits: the joint distribution 
of the number of nodes of small degrees and the degree 
profile of a node as the circuit ages. Nodes of small degrees are usually special in random circuits. For example, the nodes of outdegree $0$, known as {\em terminal nodes}, are the outputs in the circuits and have minimal outdegree (i.e., $0$), whereas the nodes of outdegree $1$ are the nodes having only one offspring. 
The study of the distribution of degrees (the {\em degree profile}) in random circuits has been a popular topic in recent research. Knowing the degree of a node can tell, for instance, 
how popular the node is in a social network, or how much demand there is on it in a routing network, which can help allocate the appropriate resources. In the context of circuits, the degree of a node determines the amount of electric current that flows through the node. 

Related circuit models are in~\cite{Tsukiji} (uniform choice of parents), in~\cite{Mahmoud2004} (a uniform positional model), in~\cite{Moler} (a model without the consideration of the dynamical aspect in the present paper). 
The recent literature adds more closely related models. 
The work in~\cite{Berger} considers local limits for preferential attachment graphs, and that in~\cite{Rollin} considers both static and dynamic update schemes, but the focus is on the evolution of the asymptotic degree of a node (they find it,
under appropriate scaling, to be beta distributed), 
as opposed  to our approach to characterize the exact distribution of such a profile; we also aim at comprehending a different kind of profile of node counts, as well. Moreover, the paper~\cite{Resnick} deals with more generalized types of parametrized weights (node affinities).
We became inspired to study 
the dynamic model after reading~\cite{Pekoz, Rollin, Ross}. The source~\cite{Mahmoud2014} discusses several variants of these circuits and the conference paper~\cite{ZhangANALCO} studies the distribution of terminal nodes.

In~\cite{Albert,Bollobas}, the authors consider a model related to the one in the present paper, but allows self-loops. The scope of~\cite{Bollobas} is different from what is covered here; our view is focused on joint distributions among the counts of node degrees. In~\cite{Ostroumova} the authors consider general preferential attachment network models in which $m$ parents
are chosen (according to some general measure of randomness). However, the authors' model is a statistic power of choice. The scope is to study an average profile for that model. The paper is of an empirical nature, where simulation is a tool to substantiate the average results. 
The paper~\cite{Dereich} considers a uniform attachment scheme
for a network grown by adding {\em one} vertex at a time, where every new node is endowed with a ``fitness.'' The models in~\cite{Dereich, Resnick, Wang} are for preferential attachment to a {\em single} parent, where the issue of dynamic versus static choice of parents does not arise.

\section{Preferential Attachment Circuits}
We consider a circuit model (with index $m \ge 1$) that grows in the following way. At the beginning of discrete time (time $0$), there is an originator, which is a single isolated node labeled with $0$. We shall refer to the nodes by their labels. So, the originator is node $0$. At time $n \ge 1$, a new node $n$ appears and attaches itself to $C^{(m)}_{n - 1}$, the circuit existing at time $n - 1$. The circuit $C^{(m)}_n$ grows from its ancestral shape $C^{(m)}_{n - 1}$ in the previous time step $n - 1$ by choosing $m \ge 1$ nodes from $C^{(m)}_{n - 1}$ as {\em parents}. We call the sequence of nodes chosen as parents of node $n$ the $n$th {\em sample}. The sample is taken with replacement. Each of the $m$ parents in the $n$th sample is joined by an edge to node $n$. The choice of the parents is not uniform and the probabilities are dynamic during the construction of a sample. The choice rather depends on the degrees of the nodes present in $C^{(m)}_{n - 1}$. A parent node is chosen with 
probability associated with its degree in $C^{(m)}_{n - 1}$ (more precisely, 
proportional to its outdegree plus one). Say the first of the $m$ 
new edges joins an existing node $j$, 
of outdegree $d$, to node~$n$, 
an event with probability proportional to $d + 1$. 
Once the edge is constructed between nodes $j$ and~$n$, 
it changes the outdegree of node $j$ to $d + 1$, and the chance of node $j$ to be chosen again to construct the second edge as parent of node~$n$ is now proportional to $d + 2$. 
The second parent of node $n$, be it $j$ or some other node, is joined to node~$n$. The process continues in this fashion, adding new edges and accounting for 
the dynamic change in their degrees, reflected in the probabilities 
of choosing existing nodes as parents, till the $m$th member of the sample has been collected and all its members are joined to node $n$.  
At this point, 
we consider that node $n$ has completed its parent selection. Once settled, parents of node $n$ will not be changed. 

Formally, this dynamic insertion scheme is reflected in the following recursive definition. Let the set of vertices of the circuit $C_{n-1}^{(m)}$ be $V_{n-1}^{(m)}$. At step $n$, we add a node to the graph and choose $m$ parents for it in $m$ steps. Let the outdegree of $v\in V_{n-1}^{(m)}$ after the $i$th addition, $i = 0, 1, \ldots, m - 1$, be
$\mbox{outdeg}_{C_{n-1,i}^{(m)}} (v)$. The probability that
$v$ is chosen as the $(i + 1)$th parent for node $n$ is 
$$\frac {\mbox{outdeg}_{C_{n-1,i}^{(m)}} (v) +1} 
{\sum_{x \in V_{n-1}^{(m)}}\left(\mbox{outdeg}_{C_{n-1,i}^{(m)}} (x) +1\right)}.$$

As the sampling is with replacement, a node like $j < n$ 
can be chosen multiple times in the sample at step $n$. For example, $m$ can be equal to five, and node $j$ may appear three times in the sample of parents for node $n$. Hence, three new edges will appear joining the nodes $j$ and $n$. 
The dynamic probabilities are quite sensitive to the order of the sequence of nodes in the sample. For example, the probability of the event that the three choices of node $j$ in the $n$th sample appear as first, second and third may not be the same as its being first, second and fifth. This is a critical element in our probability calculation. As the model allocates higher probability to nodes of higher degrees, the nodes that recruited many children are more likely to attract more children than nodes with fewer children; the model rightly deserves to be described with attributes like ``the rich get richer'' and ``success breeds success.''

Figure~\ref{Fig:circuit} illustrates the growth of a preferential (dynamic) attachment circuit with $m = 3$ during the first two insertions of nodes $1$ and $2$, showing the dynamical changes (edge addition) within each sampling step. From left to right, the probabilities of this particular stochastic path of these networks (conditioning on the previous evolution) are $1$, $1$, $1$, $1$, $\frct 1 5$, $\frct 4 6$, $\frct 2 7$. 

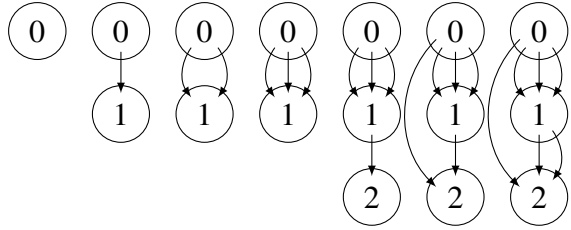
\begin{figure}[tbh]
	\begin{center}
		\tikzstyle{arrow}=[draw, -latex] 
		\begin{tikzpicture}[scale=2.2]
		\draw
		(-1,1) node [circle=0.1,draw] {0};
		\draw
		(-0.5,1) node [circle=0.1,draw] {0}
		(-0.5,0.5) node [circle=0.1,draw] {1};
		\draw[arrow](-0.5,0.875) -- (-0.5,0.625);
		\draw
		(0,1) node [circle=0.1,draw] {0}
		(0,0.5) node [circle=0.1,draw] {1};
		\draw[arrow](0.08, 0.9) to [bend left=35] (0.08, 0.6);
		\draw[arrow](-0.08, 0.9) to [bend right=35] (-0.08, 0.6)  ;
		\draw
		(0.5,1) node [circle=0.1,draw] {0}
		(0.5,0.5) node [circle=0.1,draw] {1};
		\draw[arrow](0.58, 0.9) to [bend left=35] (0.58, 0.6);
		\draw[arrow](0.42, 0.9) to [bend right=35] (0.42, 0.6);  
		\draw[arrow](0.5,0.875) -- (0.5,0.625);
		\draw
		(1,1) node [circle=0.1,draw] {0}
		(1,0.5) node [circle=0.1,draw] {1}
		(1,0) node [circle=0.1,draw] {2};
		\draw[arrow](1.08, 0.9) to [bend left=35] (1.08, 0.6);
		\draw[arrow](0.92, 0.9) to [bend right=35] (0.92, 0.6);  
		\draw[arrow](1,0.875) -- (1,0.625);
		\draw[arrow](1, 0.375) -- (1, 0.125);
		\draw
		(1.5,1) node [circle=0.1,draw] {0}
		(1.5,0.5) node [circle=0.1,draw] {1}
		(1.5,0) node [circle=0.1,draw] {2};
		\draw[arrow](1.58, 0.9) to [bend left=35] (1.58, 0.6);
		\draw[arrow](1.42, 0.9) to [bend right=35] (1.42, 0.6);
		\draw[arrow](1.5,0.875) -- (1.5,0.625);
		\draw[arrow](1.385, 0.95) to [bend right=45] (1.385, 0.06); 
		\draw[arrow](1.5, 0.375) -- (1.5, 0.125);
		\draw
		(2,1) node [circle=0.1,draw] {0}
		(2,0.5) node [circle=0.1,draw] {1}
		(2,0) node [circle=0.1,draw] {2};
		\draw[arrow](2.08, 0.9) to [bend left=35] (2.08, 0.6);
		\draw[arrow](1.92, 0.9) to [bend right=45] (1.92, 0.6);  
		\draw[arrow](2,0.875) -- (2,0.625);
		\draw[arrow](1.885, 0.95) to [bend right=45] (1.885, 0.06); 
		\draw[arrow](2.08, 0.4) to [bend left=35] (2.08, 0.1); 
		\draw[arrow](2, 0.375) -- (2, 0.125);
		\end{tikzpicture}
		\caption{The evolution of a preferential attachment circuit of index $3$ in two steps.}
		\label{Fig:circuit}
	\end{center} 
\end{figure}

In the special case $m = 1$, the circuit is a tree known in the literature as the {\em plane-oriented recursive tree} (PORT), introduced in~\cite{Mahmoud1993}, and its derivatives have received quite a bit of attention in recent years~\cite{Drmota, Fuchs, Mohan, Hwang}. 

It aids the analysis to consider {\em extended circuits}, where each node is supplemented with {\em external nodes} that correspond to the insertion positions of the next entrant. A node with outdegree $s$ is given $s+1$ external positions. 
For a visually pleasing comprehension, we put the external node in 
the ``gaps'' between the edges emanating out of a node. We think of the insertion position to the left (right) of all the edges out of a node as a virtual gap, too. Figure~\ref{Fig:extended} shows the circuits of Figure~\ref{Fig:circuit} after they have been extended. In Figure~\ref{Fig:extended}, the external nodes are shown as squares, some empty and some filled with different colors. This is a color code that will be explained in the sequel.
\begin{figure}[tbh]
	\begin{center}
		\tikzstyle{arrow}=[draw, -latex]
		\begin{tikzpicture}[scale=2.2]
		\draw
		(-1, 1) node [circle=0.1,draw] {0}
		[fill=white] (-1.04, 0.74) rectangle (-0.96, 0.66)
		(-1, 0.875) -- (-1, 0.74);
		\draw
		(-0.5, 1) node [circle=0.1,draw] {0}
		[fill=blue] (-0.74, 0.74) rectangle (-0.66, 0.66)
		(-0.34, 0.74) rectangle (-0.26, 0.66)
		(-0.58, 0.9) -- (-0.7, 0.74)
		(-0.42, 0.9) -- (-0.3, 0.74)
		(-0.5, 0.5) node [circle=0.1,draw] {1};
		\draw[arrow](-0.5, 0.875) -- (-0.5, 0.625);
		\draw
		(0,1) node [circle=0.1,draw] {0}
		(0,0.5) node [circle=0.1,draw] {1};
		\draw[arrow](0.08, 0.9) to [bend left=35] (0.08, 0.6);
		\draw[arrow](-0.08, 0.9) to [bend right=35] (-0.08, 0.6);
		\draw
		[fill=red] (-0.04, 0.74) rectangle (0.04, 0.66)
		(-0.24, 0.94) rectangle (-0.16, 0.86)
		(0.16, 0.94) rectangle (0.24, 0.86);
		\draw
		(-0.125, 1) to [bend right = 15] (-0.2, 0.94)
		(0.125, 1) to [bend left = 15] (0.2, 0.94)
		(0, 0.875) -- (0, 0.74);
		\draw
		(0.5,1) node [circle=0.1,draw] {0}
		(0.5,0.5) node [circle=0.1,draw] {1};
		\draw[arrow](0.58, 0.9) to [bend left=35] (0.58, 0.6);
		\draw[arrow](0.42, 0.9) to [bend right=35] (0.42, 0.6);  
		\draw[arrow](0.5,0.875) -- (0.5,0.625);
		\draw
		[fill=red] (0.4, 0.8) rectangle (0.48, 0.72)
		(0.52, 0.8) rectangle (0.6, 0.72)
		(0.28, 0.94) rectangle (0.36, 0.86)
		(0.64, 0.94) rectangle (0.72, 0.86);
		\draw
		(0.46, 0.88) -- (0.44, 0.8)
		(0.54, 0.88) -- (0.56, 0.8)
		(0.5, 0.375) -- (0.5, 0.24)
		(0.375, 1) to [bend right = 15] (0.32, 0.94)
		(0.625, 1) to [bend left = 15] (0.68, 0.94);
		\draw
		[fill=white] (0.46, 0.24) rectangle (0.54, 0.16);
		\draw
		(1,1) node [circle=0.1,draw] {0}
		(1,0.5) node [circle=0.1,draw] {1}
		(1,0) node [circle=0.1,draw] {2};
		\draw[arrow](1.08, 0.9) to [bend left=35] (1.08, 0.6);
		\draw[arrow](0.92, 0.9) to [bend right=35] (0.92, 0.6);  
		\draw[arrow](1,0.875) -- (1,0.625);
		\draw[arrow](1, 0.375) -- (1, 0.125);
		\draw
		[fill=red] (0.9, 0.8) rectangle (0.98, 0.72)
		(1.02, 0.8) rectangle (1.1, 0.72)
		(0.78, 0.94) rectangle (0.86, 0.86)
		(1.14, 0.94) rectangle (1.22, 0.86)
		(0.96, 0.88) -- (0.94, 0.8)
		(1.04, 0.88) -- (1.06, 0.8)
		(0.875, 1) to [bend right = 15] (0.82, 0.94)
		(1.125, 1) to [bend left = 15] (1.18, 0.94);
		\draw
		[fill=blue] (0.86, 0.24) rectangle (0.94, 0.16)
		(1.06, 0.24) rectangle (1.14, 0.16)
		(0.95, 0.38) -- (0.9, 0.24)
		(1.05, 0.38) -- (1.1, 0.24);
		\draw
		(1.5,1) node [circle=0.1,draw] {0}
		(1.5,0.5) node [circle=0.1,draw] {1}
		(1.5,0) node [circle=0.1,draw] {2};
		\draw[arrow](1.58, 0.9) to [bend left=35] (1.58, 0.6);
		\draw[arrow](1.42, 0.9) to [bend right=35] (1.42, 0.6);  
		\draw[arrow](1.5,0.875) -- (1.5,0.625);
		\draw[arrow](1.385, 0.95) to [bend right=45] (1.385, 0.06); 
		\draw[arrow](1.5, 0.375) -- (1.5, 0.125);
		\draw
		[fill=red] (1.4, 0.8) rectangle (1.48, 0.72)
		(1.52, 0.8) rectangle (1.6, 0.72)
		(1.24, 1.04) rectangle (1.32, 0.96)
		(1.64, 0.94) rectangle (1.72, 0.86)
		(1.26, 0.54) rectangle (1.34, 0.46);
		\draw
		(1.4, 0.92) to [bend right=25] (1.3, 0.54)
		(1.46, 0.88) -- (1.44, 0.8)
		(1.54, 0.88) -- (1.56, 0.8)
		(1.375, 1) -- (1.32, 1)
		(1.625, 1) to [bend left = 15] (1.68, 0.94);
		\draw
		[fill=red] (1.36, 0.24) rectangle (1.44, 0.16)
		(1.56, 0.24) rectangle (1.64, 0.16)
		(1.45, 0.38) -- (1.4, 0.24)
		(1.55, 0.38) -- (1.6, 0.24);
		\draw
		(2,1) node [circle=0.1,draw] {0}
		(2,0.5) node [circle=0.1,draw] {1}
		(2,0) node [circle=0.1,draw] {2};
		\draw[arrow](2.08, 0.9) to [bend left=35] (2.08, 0.6);
		\draw[arrow](1.92, 0.9) to [bend right=35] (1.92, 0.6);  
		\draw[arrow](2,0.875) -- (2,0.625);
		\draw[arrow](1.885, 0.95) to [bend right=45] (1.885, 0.06);
		\draw[arrow](2.08, 0.4) to [bend left=88] (2.08, 0.1) ;
		\draw[arrow](2, 0.375) -- (2, 0.125);
		\draw
		[fill=red] (1.9, 0.8) rectangle (1.98, 0.72)
		(2.02, 0.8) rectangle (2.1, 0.72)
		(1.74, 1.04) rectangle (1.82, 0.96)
		(2.14, 0.94) rectangle (2.22, 0.86)
		(1.76, 0.54) rectangle (1.84, 0.46)
		(2.2, 0.54) rectangle (2.28, 0.46);
		\draw
		(1.9, 0.92) to [bend right=25] (1.8, 0.54)
		(1.96, 0.88) -- (1.94, 0.8)
		(2.04, 0.88) -- (2.06, 0.8)
		(1.875, 1) -- (1.82, 1)
		(2.125, 0.5) -- (2.2, 0.5)
		(2.125, 1) to [bend left = 15] (2.18, 0.94);
		\draw
		[fill=red] (1.86, 0.24) rectangle (1.94, 0.16)
		(2.06, 0.24) rectangle (2.14, 0.16)
		(1.95, 0.38) -- (1.9, 0.24)
		(2.05, 0.38) -- (2.1, 0.24);
		\draw
		[fill=white] (1.96, -0.26) rectangle (2.04, -0.34)
		(2, -0.125) -- (2, -0.26);
		\end{tikzpicture}
		\caption{The evolution of an extended preferential attachment circuit of index $3$ in two steps under a color code.}
		\label{Fig:extended}
	\end{center} 
\end{figure}
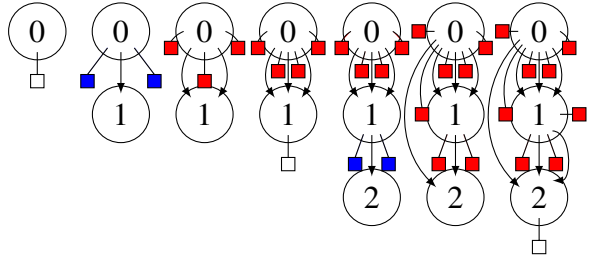

After each node insertion, we add $(m + 1)$ external nodes to the extended circuit, of which $m$ are adjoined to existing nodes, and one is added as a child to the newly inserted node. Therefore, after $n$ node insertions, we have a total of
\begin{equation}
\label{Eq:totalexternal}
\tau_n = (m + 1)n + 1
\end{equation}
external nodes. 

\section{Organization}
The rest of this manuscript is organized as follows. In Section~\ref{Sec:outdegree}, we look into the joint distribution of the numbers of nodes of small degrees. 
The first paragraph of the section introduces a color code of external nodes into white, 
blue and red. The section is divided into two subsections. Subsection~\ref{Subsec:hierarchical} deals with the moments of nodes of outdegree $0$ and $1$. In Subsection~\ref{Subsec:multimartingale}, the multivariate
martingale underlying nodes of outdegree $0$ and $1$ is uncovered and used to establish a bivariate central limit theorem for these nodes. 
We conclude in Section~\ref{Sec:degprofile} by looking 
at the degree of a specified node as the circuit ages. 
We find the exact distribution of that degree 
via a series of \polya-Eggenberger urns with ``hiccups'' in between. The section is also divided into two subsections. In Subsection~\ref{Subsec:Polya}, we say a quick word about \polya-Eggenberger urns and the property of exchangeability for an application in the next section.
We then in Subsection~\ref{Subsec:degree} carry out the computations for the exact probability distribution of the degree. However, that distribution is unwieldy for moment calculations, so we resort to stochastic recurrence to 
find the exact mean and variance of the degree. 
Upon inspecting the average degrees of 
specific nodes, 
we realize that they experience phases through the aging process. 
These phases are discussed briefly at the end of the paper.
\section{Nodes of small outdegrees}
\label{Sec:outdegree}
In this section, we determine the exact first two moments of the number of nodes of
the two smallest outdegrees (i.e., $0$ and $1$). We establish a hierarchical recurrence system to calculate these moments. Later, we conduct an analysis for the asymptotic joint distribution of the number of nodes of outdegrees 0 and 1 via multivariate martingale formulations. Higher moments can be calculated from similar consideration.
However, the calculations become significantly
harder, a manifestation of a phenomenon known as the combinatorial explosion. 
In principle, the method can be generalized to find the joint distribution of the number of nodes of outdegrees $0, 1, 2, \ldots, k$. 
Let $Y_n^{(0)}$ and $Y_n^{(1)}$ be the number of nodes of outdegree 0 and outdegree 1 of a preferential attachment circuit with index $m$ at time $n$, respectively. To study the joint distribution of $Y_n^{(0)}$ and $Y_n^{(1)}$, we rely on external nodes. In addition, we distinguish the external nodes by colors: We color the external nodes emanating from the nodes of outdegree $0$, outdegree $1$, and outdegree greater than $1$ with white, blue, and red, respectively; see Figure~\ref{Fig:extended} for an illustration. Furthermore, we respectively denote the number of white and blue external nodes in the circuit at time $n$ by $W_n$, and $B_n$. The relations between the random variables of interest and the number of white and blue external nodes are
\begin{equation}
Y_n^{(0)} = W_n, \qquad Y_n^{(1)} = \frac 1 2 \, B_n. 
\label{Eq:relationYB}
\end{equation}
Extension to include nodes of higher outdegrees requires more colors.
\subsection{Hierarchical recurrences}
\label{Subsec:hierarchical}
Let $\wigW_{n-1,s}$, $\wigB_{n-1,s}$, and $\wigR_{n-1,s}$ be the number of white, blue, and red external nodes at step $s$ ($0 \le s \le m$) in the $n$th sample, respectively, while the circuit still has $n-1$ internal nodes.  
The network evolves and certain dynamics take place while sampling parents for 
node $n$.
The sampling goes in $m$ steps. 
If a node of outdegee 0 (with one white external node attached to it) is selected at step $s$ of the sampling as parent
of the $n$th node, one edge is constructed to link it to 
node $n$, the outdegree of 
the parent node will be upgraded to 1, with two gaps for blue external nodes (one to the left of the new edge and another one to its right). 
The net result is that the white external node is replaced by two blue external nodes. If, instead, a node of outdegree 1 (with two blue external nodes attached to it)  is selected at step $s$, its outdegree goes up to 2 (defining three gaps); 
these two blue external nodes will be replaced by three red external nodes. The dynamics of red external nodes completely depend on the evolution of white and blue external nodes in the sample. All these dynamics are reflected in the equations:
\begin{align}
	\wigW_{n-1,s} &= \wigW_{n-1,s-1} - \indicator_{n-1,s}^{W},
	\label{Eq:recw}
	\\ \wigB_{n-1,s} &= \wigB_{n-1,s-1} + 2 \indicator_{n-1,s}^{W} - 2 \indicator_{n-1,s}^{B},
	\label{Eq:recb}
\end{align}
where $\indicator_{n-1, s}^{W}$ 
indicates the event that a white external node is selected
at the~$s$th step in the $n$th sample,
while the circuit still has $n-1$ internal nodes. The essence of the definition carries over to  $\indicator_{n-1,s}^{B}$,
the indicator of picking a blue 
external node at the $s$th step in the $n$th sample.
\begin{prop}
	\label{Prop:moments}
	Let $Y_n^{(0)}$ and $Y_n^{(1)}$ be the number nodes of outdegree 0 and~1 in a preferential attachment circuit with index $m \ge 2$ after $n$ insertions. For $n \ge 1$, we have\footnote{The asymptotic notation here is meant to be component-wise, as $n\to\infty$.}
	\begin{align*}
		\begin{pmatrix}
			\Expe[Y_n^{(0)}] 
			\\ \Expe[Y_n^{(1)}] 
		\end{pmatrix} &= 
		\begin{pmatrix}
			\frac{(m + 1)n + m}{2m + 1}
			\\ \frac{(m + 1)(n - 1)\bigl((2m^2 + 2m)n + 2m^2 + m + 1\bigr)}{2(3m + 1)(2m + 1)\bigl((m + 1)n - 1\bigr)} 
		\end{pmatrix} \sim \begin{pmatrix}
			\frac{m + 1}{2m + 1}
			\\ \frac{m(m + 1)}{(3m + 1)(2m + 1)} 
		\end{pmatrix}  n.
	\end{align*}
\end{prop}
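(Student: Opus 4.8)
The plan is to reduce both moments to first-order linear recurrences, the key observation being that the normalizing constant in the selection rule is deterministic. Since a parent $v$ is selected with probability proportional to $\mathrm{outdeg}(v)+1$, which is exactly the number of external nodes attached to $v$, the denominator $\sum_{x\in V_{n-1}^{(m)}}\bigl(\mathrm{outdeg}(x)+1\bigr)$ is the total number of external nodes present, equal to $\tau_{n-1}=(m+1)(n-1)+1$ by \eqref{Eq:totalexternal}, and each of the first $s-1$ edges laid down for node $n$ raises this total by exactly one. As an outdegree-$0$ node carries a single (white) external node and an outdegree-$1$ node carries two (blue) ones, this yields, with $\mathcal{F}_{n-1,s-1}$ denoting the history just before the $s$th pick of the $n$th sample,
\begin{equation*}
\Expe\bigl[\indicator_{n-1,s}^{W}\given\mathcal{F}_{n-1,s-1}\bigr]=\frac{\wigW_{n-1,s-1}}{\tau_{n-1}+s-1},\qquad
\Expe\bigl[\indicator_{n-1,s}^{B}\given\mathcal{F}_{n-1,s-1}\bigr]=\frac{\wigB_{n-1,s-1}}{\tau_{n-1}+s-1}.
\end{equation*}

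Taking expectations in \eqref{Eq:recw}--\eqref{Eq:recb}, applying the tower property and using that $\tau_{n-1}+s-1$ is non-random, I get, for $1\le s\le m$,
\begin{align*}
\Expe[\wigW_{n-1,s}] &= \Expe[\wigW_{n-1,s-1}]\cdot\frac{\tau_{n-1}+s-2}{\tau_{n-1}+s-1},\\
\Expe[\wigB_{n-1,s}] &= \Expe[\wigB_{n-1,s-1}]\cdot\frac{\tau_{n-1}+s-3}{\tau_{n-1}+s-1}+\frac{2\,\Expe[\wigW_{n-1,s-1}]}{\tau_{n-1}+s-1},
\end{align*}
with $\wigW_{n-1,0}=W_{n-1}$ and $\wigB_{n-1,0}=B_{n-1}$. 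The white line telescopes to $\Expe[\wigW_{n-1,s-1}]=\Expe[W_{n-1}]\,(\tau_{n-1}-1)/(\tau_{n-1}+s-2)$; plugging this into the blue line turns it into an inhomogeneous first-order recurrence in $s$, which is dispatched by the standard summation-factor technique to give $\Expe[\wigB_{n-1,m}]$ in terms of $\Expe[B_{n-1}]$ and $\Expe[W_{n-1}]$. Since a newly inserted node has outdegree $0$ and so contributes one white and no blue external node, $W_n=\wigW_{n-1,m}+1$ and $B_n=\wigB_{n-1,m}$, which converts the end-of-sample quantities into recurrences in $n$,
\begin{equation*}
\Expe[W_n]=\Expe[W_{n-1}]\cdot\frac{(m+1)(n-1)}{(m+1)n-1}+1,\qquad
\Expe[B_n]=\alpha_n\,\Expe[B_{n-1}]+\beta_n\,\Expe[W_{n-1}],
\end{equation*}
where $\alpha_n=\prod_{s=1}^{m}\frac{\tau_{n-1}+s-3}{\tau_{n-1}+s-1}$ telescopes to a rational function of $n$ and $\beta_n$ is an explicit rational function as well.

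Solving the $W$-recurrence --- directly via its summation factor, or simply by verifying the claimed expression by induction on $n$ starting from $\Expe[W_1]=1$ --- gives $\Expe[Y_n^{(0)}]=\Expe[W_n]=\bigl((m+1)n+m\bigr)/(2m+1)$. Substituting this closed form (valid for indices $\ge 1$) for $\Expe[W_{n-1}]$ in the $B$-recurrence leaves a genuine inhomogeneous first-order linear recurrence; solving it with the appropriate summation factor and collecting terms yields $\Expe[B_n]$, whence $\Expe[Y_n^{(1)}]=\tfrac12\Expe[B_n]$ is the stated rational expression by \eqref{Eq:relationYB}. The hypothesis $m\ge 2$ is used here only to fix the base case of the blue recurrence: when $m\ge 2$, node $0$ already has outdegree $m>1$ (red external nodes) right after the first insertion, so $B_1=0$, the initial condition implicit in the stated formula --- for $m=1$ one would instead have $B_1=2$ and a different closed form. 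The asymptotics follow at once on dividing the two closed forms by $n$ and letting $n\to\infty$. The one genuinely fiddly part is the final step: choosing the right summation factor for the $B$-recurrence and grinding the algebra down to the compact rational form in the statement is where essentially all the bookkeeping lives, the remainder being routine telescoping and induction.
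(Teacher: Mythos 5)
Your proposal is correct and follows essentially the same route as the paper: condition on the history to get the within-sample recurrences for $\Expe[\wigW_{n-1,s}]$ and $\Expe[\wigB_{n-1,s}]$ (the deterministic normalizer $\tau_{n-1}+s-1$ is exactly why this closes into linear recurrences), telescope over the $m$ sampling steps, add the ``hiccup'' $W_n=\wigW_{n-1,m}+1$, $B_n=\wigB_{n-1,m}$, and solve the resulting recurrences in $n$; the only cosmetic difference is that you work with unconditional expectations where the paper carries expectations conditional on $\F_{n-1,0}$. Your diagnosis of where $m\ge 2$ enters (the base case $B_1=0$ versus $B_1=2$ for a PORT) is consistent with the paper's Remark~\ref{Rem:moments} and with the degeneracy of the recurrence~(\ref{Eq:recB}) at $n=1$ when $m=1$.
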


\begin{proof}
	We are able to determine the first moment of $\wigW_{n-1,s}$ by solely solving the recurrence equation~(\ref{Eq:recw}). Let
	$\F_{n-1,s-1}$ be the $\sigma$-field generated by the network growth after $n-1$ node insertions 
	and up until the first $s - 1$ steps of sampling parents for the $n$th node, 
	for $1 \le s \le m$.
	We take the conditional expectation (conditioning on $\F_{n-1,s-1}$) to obtain the expression
	\begin{align*}
		\Expe[\wigW_{n-1,s} \given \F_{n-1,s-1}] &= \wigW_{n-1,s-1} - \Expe[\indicator_{n-1,s}^{W} \given \F_{n-1,s-1}] \\
		&=  \wigW_{n-1,s-1} - \frac {\wigW_{n-1,s-1}} {\tau_{n-1}+ s - 1}\\
		&= \frac {(m + 1)(n - 1) + s-1} {(m + 1)(n - 1) + s} \, \wigW_{n-1,s-1}.
	\end{align*}
	Taking another conditional expectation (on $\F_{n-1, s-2}$) , and using the tower property (see Theorem 34.4 on page 448 in~\cite{Billingsley}), we get
	$$
	\Expe[\wigW_{n-1,s} \given \F_{n-1, s-2}] 
	=  \frac {(m + 1)(n - 1) + s-2}{(m + 1)(n - 1) + s} \, \wigW_{n-1, s-2}.$$
	Iterating all the way back to the conditions right before the $n$th sample is collected, where $\wigW_{n-1, 0} = W_{n - 1}$, we obtain
	$$\Expe[\wigW_{n-1,s} \given \F_{n-1,0}] = \frac{(m + 1)(n - 1)W_{n - 1}}{(m + 1)(n - 1) + s}.$$
	Similarly, we are able to construct a recurrence for $\Expe[\wigB_{n-1,s} \given \F_{n - 1,0}]$ with the solution 
	\begin{equation}
	\Expe[\wigB_{n-1,s} \given \F_{n - 1,0}] = \begin{cases}
	\frac{(mn - m + n - 2)B_{n - 1} + 2W_{n - 1}}{mn - m + n }, &   s = 1; \\
	\frac{(m + 1)(n - 1)\bigl((mn - m + n - 2)B_{n - 1} + 2sW_{n - 1}\bigr)}{(mn - m + n + s - 2)(mn - m + n + s - 1)}, &  2 \le s \le  m.
	\end{cases}
	\label{Eq:Port}
	\end{equation}
	For the random variables of interest, $W_n$ and $B_n$, conditioning on $\F_{n - 1, 0}$ yields 
	\begin{align*}
		\Expe[W_{n} \given \F_{n - 1,0}] &= \Expe[\wigW_{n-1,m} \given \F_{n - 1,0}] + 1,
		\\ \Expe[B_{n} \given \F_{n - 1,0}] &= \Expe[\wigB_{n-1,m} \given \F_{n - 1,0}].
	\end{align*}
	In the first recurrence, we have an additional 1 on the right-hand side. The reason is that when a sample is collected, a new node is added to the network and the newcomer automatically
	generates an additional white external node. 
	We dub this phenomenon a ``hiccup,'' which will be discussed 
	in detail in Section~\ref{Sec:degprofile}. Plugging the results for 
	$\Expe[\wigW_{n-1,m} \given \F_{n - 1,0}]$ 
	and $\Expe[\wigB_{n-1,m} \given \F_{n - 1,0}]$ in the recurrences, we arrive at
	\begin{align}
		\Expe[W_{n} \given \F_{n - 1, 0}] &= \frac{(m + 1)(n - 1) W_{n - 1}}{(m + 1)(n - 1) + m} + 1,
		\label{Eq:recW}
		\\ \Expe[B_{n} \given \F_{n - 1, 0}] &= \frac{(m + 1)(n - 1)\bigl((mn - m + n - 2)B_{n - 1} + 2mW_{n - 1}\bigr)}{(mn + n - 2)(mn + n - 1)}.
		\label{Eq:recB}
	\end{align}
	We solve the equations recursively and simultaneously
	under proper initial conditions, 
	and translate the solutions back to $Y_n^{(0)}$ and $Y_n^{(1)}$. We get the stated result for $m \ge 2$.
\end{proof}
\begin{rem}
	\label{Rem:moments}
	In a preferential attachment circuit with index $m = 1$ (i.e., a PORT), the first moments of $W_n$ and $B_n$ are a little different from the expressions in Proposition~\ref{Prop:moments}. Display~(\ref{Eq:Port}) explains the disparity.
	When $m\ge 2$, the case $s=1$ is only a transient state in the sample collection,
	and the circuit comes out of it by the time $\wigW_{n-1,m}$
	is needed; the bottom line in the display is used.
	By contrast, in the case $m=1$, at the end of the sampling, 
	$s = m = 1$, and when the sample is collected, 
	the circuit did not come out of the state $s=1$; the bottom line in the display is used. 
	
	When we translate the first moments of $W_n$ and $B_n$ in a PORT back to $Y_{n}^{(0)}$ and $Y_{n}^{(1)}$, we get
	for $n \ge 1$,
	$$
	\begin{pmatrix}
	\Expe[Y_n^{(0)}]
	\\ \Expe[Y_n^{(1)}] 
	\end{pmatrix} = 
	\begin{pmatrix}
	\frac{2n + 1}{3}
	\\ \frac{n^2 + 2}{3(2n - 1)}
	\end{pmatrix} \sim \begin{pmatrix}
	\frct{2}{3}
	\\ \frct{1}{6} 
	\end{pmatrix} n.
	$$
	This recovers the results of Theorem 2 and Corollary 4 in~\cite{Mahmoud1993}.
\end{rem}
For calculation of higher moments, we appeal to the recurrence relations~(\ref{Eq:recw}) and (\ref{Eq:recb}). We raise them to appropriate powers. 
In this manuscript, we only present the calculations for the second moments for the
random variables $W_n$ and $B_n$, for $m \ge 2$. Higher moments can be obtained via a similar approach. 
Moments of these variables in a random PORT can also 
be calculated by these methods, while
observing, as already noted, that the boundary conditions for the PORT ($m=1$)
are slightly different
from those for $m \ge 1$. 

Towards second moment calculation, we start with squaring the recurrence relations~(\ref{Eq:recw}) and (\ref{Eq:recb}), leading to
\begin{align}
	\wigW^2_{n-1,s} &= \wigW^2_{n-1,s-1} - 2\wigW_{n-1,s-1}\,\indicator_{n-1,s}^{W} + \indicator_{n-1,s}^{W},
	\label{Eq:recwsq}
	\\ \wigB^2_{n-1,s} &= \wigB^2_{n-1,s-1} + 4 \,\indicator_{n-1,s}^{W} + 4\, \indicator_{ n-1+s}^{B}  \nonumber \\
	&\qquad + 4 \, \wigB_{n-1,s-1}\, \indicator_{n-1,s}^{W} - 4\, \wigB_{n-1,s-1}\, \indicator_{n-1,s}^{B}.
	\label{Eq:recbsq}
\end{align}
The term $-8\, \indicator_{n-1,s}^{W}\indicator_{n-1,s}^{B}$ in~(\ref{Eq:recbsq}) vanishes, because~$\indicator_{n-1,s}^{W}$ and $\indicator_{n-1,s}^{B}$ indicate mutually exclusive events. We are able to calculate the second moment 
of~$\wigW_{n-1,s}$ based only on 
the recurrence relation~(\ref{Eq:recwsq}). To calculate the second moment 
of $\wigB_{n-1,s}$, we first need to find the conditional first mixed moment of $\wigW_{n-1,s}\, \wigB_{n-1,s}$, i.e., 
we need to find 
$\Expe[\wigW_{n-1,s}\, \wigB_{n-1,s} \given \F_{n - 1,0}]$. Details are 
presented in the proof of the following proposition.
\begin{prop}
	\label{Prop:samplemoments}
	Let $\wigW_{n-1,m}$ and $\wigB_{n-1,m}$ be the number of white and blue external nodes at step $m$ in the $n$th sample of a preferential attachment circuit with index $m$. We have
	\begin{align*}
		&\Expe[\wigW^2_{n-1,m} \given \F_{n - 1,0}] \\
		&\qquad\qquad\qquad= \frac{(m + 1)(n - 1)\bigl((mn - m + n - 2)W_{n - 1} + m \bigr) W_{n - 1}}{(mn + n - 2)(mn + n - 1)},
		\\ &\Expe[\wigB^2_{n-1,m} \given \F_{n - 1,0}] = \frac{(m + 1)(n - 1)}{3(mn + n - 4)(mn + n - 3)(mn + n - 2)}
		\\ &\qquad\qquad\qquad \qquad\qquad\qquad\times{} \frac{1}{(mn + n - 1)} (\wigC_1 W^2_{n - 1} + \wigC_2 W_{n - 1}B_{n - 1} 
		\\ &\qquad\qquad\qquad\qquad\qquad{} + \wigC_3 B^2_{n - 1} + \wigC_4 W_{n - 1} + \wigC_5 B_{n - 1}),
	\end{align*} 
	where $\wigC_1$, $\wigC_2$, $\wigC_3$, $\wigC_4$, and $\wigC_5$ are 
	constants\footnote{For brevity here,
		we relegate the presentation of these coefficients
		to Appendix~\ref{App:coefficients}.} depending on $m$ and $n$.
\end{prop}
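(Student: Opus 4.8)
\medskip
\noindent\textbf{Proof proposal.} The plan is to obtain the two displayed formulas in three stages: the second moment of $\wigW_{n-1,m}$; then the mixed first moment of $\wigW_{n-1,s}\wigB_{n-1,s}$; then the second moment of $\wigB_{n-1,m}$ — the mixed moment being the one genuinely new ingredient the last stage requires. Write $a=(m+1)(n-1)$; just before the $s$th edge of the $n$th sample is added, the extended circuit has $\tau_{n-1}+(s-1)=a+s$ external nodes, so
\[
\Expe\bigl[\indicator_{n-1,s}^{W}\given\F_{n-1,s-1}\bigr]=\frac{\wigW_{n-1,s-1}}{a+s},\qquad
\Expe\bigl[\indicator_{n-1,s}^{B}\given\F_{n-1,s-1}\bigr]=\frac{\wigB_{n-1,s-1}}{a+s},
\]
the same selection probabilities used in the proof of Proposition~\ref{Prop:moments}; from that proof I also use $\Expe[\wigW_{n-1,s}\given\F_{n-1,0}]=aW_{n-1}/(a+s)$ and the closed form~(\ref{Eq:Port}) for $\Expe[\wigB_{n-1,s}\given\F_{n-1,0}]$. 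For the white part, take $\Expe[\,\cdot\given\F_{n-1,s-1}]$ in~(\ref{Eq:recwsq}), use $(\indicator_{n-1,s}^{W})^{2}=\indicator_{n-1,s}^{W}$, and pass to $\Expe[\,\cdot\given\F_{n-1,0}]$ via the tower property; writing $u_s:=\Expe[\wigW_{n-1,s}^{2}\given\F_{n-1,0}]$ this gives the first-order linear recurrence
\[
u_s=\frac{a+s-2}{a+s}\,u_{s-1}+\frac{a\,W_{n-1}}{(a+s)(a+s-1)},\qquad u_0=W_{n-1}^{2}.
\]
The substitution $u_s=\frac{w_s}{(a+s)(a+s-1)}$ reduces it to $w_s=w_{s-1}+aW_{n-1}$, hence $w_s=a(a-1)W_{n-1}^{2}+s\,aW_{n-1}$, and setting $s=m$ (so $a+m=mn+n-1$, $a+m-1=mn+n-2$, $a-1=mn-m+n-2$) reproduces the asserted $\Expe[\wigW_{n-1,m}^{2}\given\F_{n-1,0}]$.

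For the mixed moment, multiply~(\ref{Eq:recw}) by~(\ref{Eq:recb}) and expand; after $(\indicator_{n-1,s}^{W})^{2}=\indicator_{n-1,s}^{W}$ and $\indicator_{n-1,s}^{W}\indicator_{n-1,s}^{B}=0$ are applied, taking $\Expe[\,\cdot\given\F_{n-1,s-1}]$ and then $\Expe[\,\cdot\given\F_{n-1,0}]$ and feeding in the closed forms for $\Expe[\wigW_{n-1,s-1}^{2}\given\F_{n-1,0}]$ and $\Expe[\wigW_{n-1,s-1}\given\F_{n-1,0}]$ from the previous step gives, with $M_s:=\Expe[\wigW_{n-1,s}\wigB_{n-1,s}\given\F_{n-1,0}]$,
\[
M_s=\frac{a+s-3}{a+s}\,M_{s-1}+\frac{2}{a+s}\Bigl(\Expe[\wigW_{n-1,s-1}^{2}\given\F_{n-1,0}]-\Expe[\wigW_{n-1,s-1}\given\F_{n-1,0}]\Bigr),\qquad M_0=W_{n-1}B_{n-1}.
\]
Its integrating factor telescopes, $\prod_{s'=1}^{s}\frac{a+s'-3}{a+s'}=\frac{(a-2)(a-1)a}{(a+s-2)(a+s-1)(a+s)}$, so $M_s$ is a finite sum of rational terms in $s'$ that collapses to a closed form quadratic in $W_{n-1},B_{n-1}$ over the denominator $(a+s-2)(a+s-1)(a+s)$.

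For the blue part, take $\Expe[\,\cdot\given\F_{n-1,s-1}]$ in~(\ref{Eq:recbsq}) (the cross term $-8\indicator_{n-1,s}^{W}\indicator_{n-1,s}^{B}$ has already vanished) and then $\Expe[\,\cdot\given\F_{n-1,0}]$; with $b_s:=\Expe[\wigB_{n-1,s}^{2}\given\F_{n-1,0}]$,
\[
b_s=\frac{a+s-4}{a+s}\,b_{s-1}+\frac{4}{a+s}\Bigl(\Expe[\wigW_{n-1,s-1}\given\F_{n-1,0}]+\Expe[\wigB_{n-1,s-1}\given\F_{n-1,0}]+M_{s-1}\Bigr),\qquad b_0=B_{n-1}^{2}.
\]
Here the first two forcing terms are known closed forms (the second from~(\ref{Eq:Port}), whose $s=1$ branch is invoked only at the transition $s-1=1$ — legitimate because $m\ge 2$; cf.\ Remark~\ref{Rem:moments}), and the third is the mixed moment just found. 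The integrating factor telescopes to $\prod_{s'=1}^{s}\frac{a+s'-4}{a+s'}=\frac{(a-3)(a-2)(a-1)a}{(a+s-3)(a+s-2)(a+s-1)(a+s)}$, so $b_s$ is once more a sum of finitely many rational terms in $s'$; summing them and collecting powers of $W_{n-1}$ and $B_{n-1}$ yields, at $s=m$, the asserted shape $\wigC_1W_{n-1}^{2}+\wigC_2W_{n-1}B_{n-1}+\wigC_3B_{n-1}^{2}+\wigC_4W_{n-1}+\wigC_5B_{n-1}$ over $3(mn+n-4)(mn+n-3)(mn+n-2)(mn+n-1)$, with $\wigC_1,\dots,\wigC_5$ the polynomials in $m,n$ recorded in Appendix~\ref{App:coefficients}.

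The step I expect to be the main obstacle is this last summation. The forcing term of the $b_s$ recurrence is a sum of four rational functions of $s'$ (one of them, $M_{s-1}$, already a short sum of such), each of which must be multiplied by the quartic integrating factor, summed over $1\le s'\le m$, and simplified; verifying that the common denominator is exactly $3(mn+n-4)(mn+n-3)(mn+n-2)(mn+n-1)$ and that the numerator stays quadratic in $(W_{n-1},B_{n-1})$ is essentially a symbolic-computation task and is where the bulk of the work lies. A minor but real point of care is to use the $s=1$ line of~(\ref{Eq:Port}) exactly once and not propagate it, as Remark~\ref{Rem:moments} warns.
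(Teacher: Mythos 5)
Your proposal is correct and follows essentially the same route as the paper: square (and cross-multiply) the one-step recurrences~(\ref{Eq:recw})--(\ref{Eq:recb}), take conditional expectations, and telescope the resulting first-order linear recurrences in $s$, with the mixed moment $\Expe[\wigW_{n-1,s}\wigB_{n-1,s}\given\F_{n-1,0}]$ computed as the auxiliary ingredient needed for the blue second moment. Your closed form for $\Expe[\wigW_{n-1,s}^{2}\given\F_{n-1,0}]$ agrees with the paper's intermediate formula, and your placement of $M_{s-1}$ (rather than the index $s$ appearing in the paper's displayed recurrence for $\Expe[\wigB_{n-1,s}^{2}\given\F_{n-1,0}]$) is the correct reading of the derivation.
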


\begin{proof}
	We iterate the recurrence equation~(\ref{Eq:recwsq}) back to the initial condition
	$\wigW^2_{n-1,0}$ $= W^2_{n - 1}$, 
	given $\F_{n - 1,0}$, and obtain
	$$\Expe[\wigW^2_{n-1,s} \given \F_{n - 1,0}] = \frac{(m + 1)(n - 1)\bigl((mn - m + n - 2)W_{n - 1} + s \bigr) W_{n - 1}}{(mn - m + n + s - 2)(mn - m + n + s - 1)},$$
	for all $1 \le s \le m$. 
	Accordingly, we get $\Expe[\wigW^2_{n-1,s} \given \F_{n - 1,0}]$, as stated in the proposition. On the other hand, we take the expectation of both sides of (\ref{Eq:recbsq}) and obtain:
	\begin{align*}
		\Expe[\wigB^2_{n-1,s} \given \F_{n - 1,0}] &= \left(1 - \frac{4}{(m + 1)(n - 1) + s}\right) \Expe[\wigB^2_{n-1,s-1} \given \F_{n - 1,0}] 
		\\ &\qquad{}+ \frac{4\, \Expe[\wigW_{n-1,s-1} \given \F_{n - 1,0}]}{(m + 1)(n - 1) + s} + \frac{4\, \Expe[\wigB_{n-1,s-1} \given \F_{n - 1,0}]}{(m + 1)(n - 1) + s} 
		\\ &\qquad\qquad{} + \frac{4\, \Expe[\wigW_{n-1,s}\wigB_{n-1,s} \given \F_{n - 1,0}]}{(m + 1)(n - 1) + s}.
	\end{align*}
	On the right-hand side of the latter equation, the first term forms the recurrence for $\Expe[\wigB^2_{n-1,s} \given \F_{n - 1,0}]$, and the second and third terms have been obtained in previous calculations, whereas the last term remains unknown. 
	We cross-multiply recurrence equations~(\ref{Eq:recw}) 
	and~(\ref{Eq:recb}) and get
	\begin{align*}
		\wigW_{n-1,s}\, \wigB_{n-1,s} &= \wigW_{n-1,s-1}\, \wigB_{n-1,s-1} - \wigB_{n-1,s-1}\, \indicator^{W}_{n-1,s} + 2W_{n-1,s-1}\, \indicator^{W}_{n-1,s} \nonumber \\
		&\qquad{} - 2\, \indicator^{W}_{n-1,s} - 2\,\wigW_{n-1,s-1}\, \indicator^{B}_{n-1,s}.
	\end{align*}
	Taking expectations, we arrive at
	\begin{align*}
		\Expe[\wigW_{n-1,s}\, \wigB_{n-1,s} \given \F_{n - 1,0}] &= \left(1 - \frac{3}{(m + 1)(n - 1) + s}\right) \\
		&\qquad\qquad\quad\quad {} \times \Expe[\wigW_{n-1,s-1}\, \wigB_{n-1,s-1} \given \F_{n - 1,0}]
		\\ &\qquad\qquad{}+ 2\, \frac{\Expe[\wigW^2_{n-1,s-1} \given \F_{n - 1,0}]}{(m + 1)(n - 1) + s} \\
		&\qquad\qquad {} - 2\, \frac{\Expe[\wigW_{n-1,s-1} \given \F_{n - 1,0}]}{(m + 1)(n - 1) + s}. 
	\end{align*}
	Plugging in the results of $\Expe[\wigW^2_{n-1,s-1} \given \F_{n - 1,0}]$ and $\Expe[\wigW_{n-1,s-1} \given \F_{n - 1,0}]$, we iterate the recurrence equation back to the initial condition
	$\wigW_{n-1,0}\, \wigB_{n-1,0} = W_{n - 1}B_{n - 1}$. 
	We get
	\begin{align*}
		&\Expe[\wigW_{n-1,s}\, \wigB_{n-1,s} \given \F_{n - 1,0}] 
		\\ &\qquad\qquad{}= \frac{(m + 1)(n - 1)(mn - m + n - 2)W_{n - 1} }{(mn - m + n + s -3)(mn - m + n + s - 2)}\\ 
		&\qquad\qquad\qquad\qquad{}\times \frac{1}{(mn - m + n + s - 1)}\bigl( (mn - m + n - 3)B_{n - 1}
		\\ &\qquad\qquad\qquad\qquad\qquad{}+ 2sW_{n - 1} - 2s\bigr).
	\end{align*}
	Then, we are able to solve the recurrence equation for $\Expe[\wigB^2_{n-1,s} \given \F_{n - 1,0}]$. The exact solution is presented in Appendix~\ref{App:coefficients}.  
\end{proof}

\begin{prop}
	\label{Prop:secmoments}
	Let $W_n$ and $B_n$ be the number of white and blue external nodes in a preferential attachment circuit with index $m \ge 2$ after $n$ insertions. For $n \ge 1$, we have
	\begin{align*}
		\Expe[W^2_n] &= \frac{(m + 1)^3 n^3}{(mn + n - 1)(2m + 1)^2} + \frac{1}{(mn + n - 1)(2m + 1)^2(3m + 1)}
		\\ &\qquad{}\times\bigl((8m^2 - m - 1)(m + 1)^2 n^2 + m(m + 1)(3m^2 - 6m - 1)n
		\\ &\qquad\qquad {}- m(2m^3 + 6m^2 + 3m + 1)\bigr),
		\\ \Expe[B_n^2] &= \frac{4m^2(m + 1)^2}{(3m + 1)^2(2m + 1)^2} \, n^2 + \frac{96m^4 + 138m^3 + 83m^2 + 18m + 1}{(5m + 1)(4m + 1)(3m + 1)^2}
		\\ &\qquad{}\times\frac{4(m + 1)m}{(2m + 1)^2} \, n + O(1).
	\end{align*} 
\end{prop}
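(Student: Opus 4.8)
The plan is to derive, and then solve in a triangular order, a system of first-order linear recurrences for the three second-order moments $\Expe[W_n^2]$, $\Expe[W_n B_n]$ and $\Expe[B_n^2]$. The bridge between the within-sample variables of Proposition~\ref{Prop:samplemoments} and the circuit-level variables is the deterministic ``hiccup'' identity already exploited for the first moments: a node just inserted has outdegree $0$, hence carries exactly one white external node and no blue one, so $W_n = \wigW_{n-1,m} + 1$ and $B_n = \wigB_{n-1,m}$ surely. Squaring these, $W_n^2 = \wigW_{n-1,m}^2 + 2\wigW_{n-1,m} + 1$, $\; W_n B_n = \wigW_{n-1,m}\wigB_{n-1,m} + \wigB_{n-1,m}$, and $B_n^2 = \wigB_{n-1,m}^2$.

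First I would condition on $\F_{n-1,0}$ and substitute the conditional moments already in hand: $\Expe[\wigW_{n-1,m}\given\F_{n-1,0}]$ and $\Expe[\wigB_{n-1,m}\given\F_{n-1,0}]$ from the proof of Proposition~\ref{Prop:moments}, $\Expe[\wigW_{n-1,m}^2\given\F_{n-1,0}]$ and $\Expe[\wigB_{n-1,m}^2\given\F_{n-1,0}]$ from Proposition~\ref{Prop:samplemoments}, and $\Expe[\wigW_{n-1,m}\wigB_{n-1,m}\given\F_{n-1,0}]$ from the closed form displayed in its proof, taken at $s=m$. The tower property then turns the first identity into a recurrence for $\Expe[W_n^2]$ whose only unknowns are $\Expe[W_{n-1}^2]$ and the already-known $\Expe[W_{n-1}]$; the second into a recurrence for $\Expe[W_n B_n]$ that in addition involves $\Expe[W_{n-1}^2]$; and the third into a recurrence for $\Expe[B_n^2]$ that in addition involves $\Expe[W_{n-1}^2]$ and $\Expe[W_{n-1}B_{n-1}]$. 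The structural point is that $\Expe[\wigW_{n-1,m}^2\given\F_{n-1,0}]$ is purely quadratic in $W_{n-1}$, so the $W$-moments never feel the $B$-moments, and the system can be solved in the order $\Expe[W_n^2]$, then $\Expe[W_n B_n]$, then $\Expe[B_n^2]$.

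Each of these recurrences has the shape $x_n = \phi_n x_{n-1} + \psi_n$, with $\phi_n$ a ratio of linear factors in $n$ and $\psi_n$ a rational function of $n$ once the previously determined moments are inserted. For $\Expe[W_n^2]$ one finds, by iterating the squared within-sample recurrence~(\ref{Eq:recwsq}) in $s$, that $\phi_n = \frac{(m+1)(n-1)\bigl((m+1)(n-1)-1\bigr)}{\bigl((m+1)n-2\bigr)\bigl((m+1)n-1\bigr)}$, whose product telescopes to a short quotient of Pochhammer-type factors; the ensuing particular solution is a finite sum of rational terms that collapses to the stated rational function of $n$. The tidiest way to record this is to exhibit the claimed closed form and verify by induction on $n$, starting from $W_1 = 1$, that it satisfies the recurrence. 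The same procedure yields $\Expe[W_n B_n]$, which then feeds the last recurrence. For $\Expe[B_n^2]$ only an $O(1)$ error is sought, so a full solution is unnecessary: iterating~(\ref{Eq:recbsq}) in $s$ gives $\phi_n = \prod_{s=1}^m\bigl(1 - \frac{4}{(m+1)(n-1)+s}\bigr) = 1 - \frac{4m}{(m+1)n} + O(n^{-2})$, whence $\prod_k\phi_k$ decays like a negative power of $n$ and contributes only at the $O(1)$ level; substituting the known first and second moments makes $\psi_n$ explicit, and matching the ansatz $\Expe[B_n^2] = a_2 n^2 + a_1 n + O(1)$ against the recurrence pins down $a_2$ and $a_1$, which are the two displayed coefficients.

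The main obstacle is not conceptual but the bulk of the inhomogeneous terms, most acutely for $\Expe[B_n^2]$: once the coefficients $\wigC_1,\dots,\wigC_5$ of Proposition~\ref{Prop:samplemoments} (themselves depending on $m$ and $n$) and all the previously derived moments are substituted, $\psi_n$ is a sum of several competing rational functions of $m$ and $n$, and extracting precisely the $n^2$- and $n$-coefficients through the summation takes careful bookkeeping, or symbolic computation. The $\Expe[W_n^2]$ calculation is comparatively clean because it decouples from $B$ entirely, and the mixed moment is the intermediate case; no new idea beyond the hierarchical-recurrence machinery of the preceding proofs is required---this is precisely the ``combinatorial explosion'' mentioned in the text, carried one step further.
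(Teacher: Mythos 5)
Your proposal is correct and follows essentially the same route as the paper: square (and cross-multiply) the hiccup identities $W_n=\wigW_{n-1,m}+1$, $B_n=\wigB_{n-1,m}$, feed in the conditional moments from Propositions~\ref{Prop:moments} and~\ref{Prop:samplemoments} together with the mixed moment $\Expe[\wigW_{n-1,m}\wigB_{n-1,m}\given\F_{n-1,0}]$, and solve the resulting first-order linear recurrences in the triangular order $\Expe[W_n^2]$, $\Expe[W_nB_n]$, $\Expe[B_n^2]$. The only (harmless) deviation is that the paper solves the $\Expe[B_n^2]$ recurrence exactly (the closed form is in Appendix~\ref{App:secondmoment}) rather than matching an $a_2n^2+a_1n+O(1)$ ansatz, but your shortcut suffices for the statement as given.
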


\begin{proof}
	We calculate the second moment of $W_n$ by squaring both sides of
	the almost-sure relation $W_n = \wigW_{n - 1, m}$, and taking the expectation on both sides; that is
	\begin{align*}
		\Expe[W^2_n \given \F_{n - 1,0}] &= \Expe\bigl[(\wigW_{n-1,m} \given \F_{n - 1,0} + 1)^2 \bigr]
		\\ &= \Expe[\wigW^2_{n-1,m} \given \F_{n - 1,0}] + 2\Expe[\wigW_{n-1,m} \given \F_{n - 1,0}] + 1
		\\ &= \frac{(m + 1)(n - 1)}{(mn + n - 2)(mn + n - 1)} \bigl((mn - m + n - 2)W^2_{n - 1}
		\\ &\qquad {}+ (2mn + m + 2n - 4)W_{n - 1} \bigr) + 1. 
	\end{align*}
	We take another expectation and receive the recurrence for $\Expe[W^2_n]$. 
	We solve it with initial condition $\Expe[W^2_0] = 1$, and we obtain the stated result.
	
	We have a similar argument for $\Expe[B^2_n \given \F_{n - 1,0}]$, namely
	\begin{align*}
		\Expe[B_n^2 \given \F_{n - 1,0}] &= \Expe[\wigB^2_{n-1,m} \given \F_{n - 1,0}] 
		\\ &= \frac{(m + 1)(n - 1)}{3(mn + n - 4)(mn + n - 3)(mn + n - 2)}
		\\ &\qquad \times{} \frac{1}{(mn + n - 1)} (\wigC_1 W^2_{n - 1} + \wigC_2 W_{n - 1}B_{n - 1} 
		\\ &\qquad\qquad{} + \wigC_3 B^2_{n - 1} + \wigC_4 W_{n - 1} + \wigC_5 B_{n - 1}).
	\end{align*}
	We need to calculate $\Expe[W_nB_n]$ before we are able to obtain a clear recurrence equation for $\Expe[B^2_n]$. We multiply the two almost-sure equations 
	$$W_n = \wigW_{n-1,m} \qquad \mbox{and} \qquad B_n = \wigB_{n-1,m},$$	
	and get
	\begin{align*}
		\Expe[W_nB_n \given \F_{n - 1,0}] &= \Expe[(\wigW_{n-1,m} + 1)\wigB_{n-1,m} \given \F_{n - 1,0}]
		\\ &= \frac{(m + 1)(n - 1)(mn - m + n - 2)W_{n - 1} }{(mn + n - 3)(mn + n - 2)(mn + n - 1)}
		\\ &\qquad{}\times \bigl( (mn - m + n - 3)B_{n - 1} + 2mW_{n - 1} - 2m\bigr)
		\\ &\qquad{}+  \frac{(m + 1)(n - 1)}{(mn + n - 2)(mn + n - 1)}
		\\ &\qquad\qquad{}\times\bigl((mn - m + n - 2)B_{n - 1} + 2mW_{n - 1}\bigr).
	\end{align*}
	We take another expectation
	on both sides and solve the recurrence equation for $\Expe[W_n B_n]$ with initial condition $\Expe[W_0 B_0] = 0$, leading to
	\begin{align*}
		\Expe[W_n B_n] &= \frac{(n - 1)}{(4m + 1)(3m + 1)(2m + 1)^2 (mn + n - 2)(mn + n - 1)}
		\\ &\qquad{}\times \bigl(2m(4m + 1)(m + 1)^4 n^3 
		\\ &\qquad\qquad{}+ (8m^3 - 16m^2 + m + 1)(m + 1)^3 n^2
		\\ &\qquad\qquad\qquad{}- (22m^3 + 3m^2 + 13m + 2)(m + 1)^2 n
		\\ &\qquad\qquad\qquad\qquad{}- 2m(m + 1)(6m^3 + 5m^2 + 3m - 2)\bigr).
	\end{align*}
	We plug $\Expe[W_nB_n]$ back into the equation and obtain a clean recurrence equation for $\Expe[B^2_n]$. With the the initial condition $\Expe[B^2_0] = 0$, we get the second moment of $B_n$. An asymptotic
	approximation is stated in the proposition, while the exact solution is provided in Appendix~\ref{App:secondmoment}.
\end{proof}
According to Equation~(\ref{Eq:relationYB}), we obtain the following corollary.
\begin{cor}
	\label{Cor:cov}
	Let $Y_n^{(0)}$ and $Y_n^{(1)}$ be the number of nodes of outdegree 0 and~1 in a preferential attachment circuit with index $m \ge 2$ after $n$ insertions. For $n \ge 1$, 
	we have
	$$\frac{1}{n} \Cov(Y_n^{(0)}, Y_n^{(1)}) \to \begin{pmatrix}
	\frac{2m^2(m + 1)}{(3m + 1)(2m + 1)^2} & -\frac{4(m + 1)^2m^2}{(4m + 1)(3m + 1)(2m + 1)^2}
	\\ -\frac{4(m + 1)^2m^2}{(4m + 1)(3m + 1)(2m + 1)^2} & \frac{2m^2(m + 1)(48m^3 + 59m^2 + 27m + 4)}{(5m + 1)(4m + 1)(3m + 1)^2(2m + 1)^2}
	\end{pmatrix}.$$
\end{cor}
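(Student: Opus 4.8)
The plan is to reduce everything to quantities already in hand. By the deterministic relations in~(\ref{Eq:relationYB}), $Y_n^{(0)}=W_n$ and $Y_n^{(1)}=\tfrac12 B_n$, so the covariance matrix of $\bigl(Y_n^{(0)},Y_n^{(1)}\bigr)$ is obtained from that of $(W_n,B_n)$ by scaling the $(1,1)$ entry by $1$, the two off-diagonal entries by $\tfrac12$, and the $(2,2)$ entry by $\tfrac14$. Hence it is enough to find the asymptotics of $\tfrac1n\Var(W_n)$, $\tfrac1n\Cov(W_n,B_n)$, and $\tfrac1n\Var(B_n)$ and then apply these scalings.

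First I would collect the ingredients. Proposition~\ref{Prop:secmoments} supplies $\Expe[W_n^2]$ (exactly) and $\Expe[B_n^2]$ (to $O(1)$), and the proof of that proposition produces, en route, the exact value of $\Expe[W_nB_n]$; I would simply quote all three. For the first moments I would read off $\Expe[W_n]=\Expe[Y_n^{(0)}]=\frac{(m+1)n+m}{2m+1}$ and $\Expe[B_n]=2\,\Expe[Y_n^{(1)}]$ from Proposition~\ref{Prop:moments}. Then I would form $\Var(W_n)=\Expe[W_n^2]-(\Expe[W_n])^2$, $\Var(B_n)=\Expe[B_n^2]-(\Expe[B_n])^2$, and $\Cov(W_n,B_n)=\Expe[W_nB_n]-\Expe[W_n]\Expe[B_n]$.

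The structural point making the normalization by $n$ (rather than $n^2$) correct is that in each of these three differences the leading \emph{quadratic} terms cancel. For instance the $n^3$ term of $\Expe[W_n^2]$ contributes $\frac{(m+1)^3 n^3}{((m+1)n-1)(2m+1)^2}\sim\frac{(m+1)^2 n^2}{(2m+1)^2}$, which matches exactly the $n^2$ term of $(\Expe[W_n])^2$; the analogous cancellation happens for $B_n$ and for the cross term. After the cancellation, dividing by $n$ and sending $n\to\infty$ leaves precisely the constants in the statement; the $O(1)$ slack allowed in $\Expe[B_n^2]$ is harmless since it affects only the constant term of $\Var(B_n)$, not its linear coefficient. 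Finally I would reinsert the scalings $1,\tfrac12,\tfrac14$ from~(\ref{Eq:relationYB}) to assemble the displayed matrix, the negative off-diagonal sign emerging automatically from the computation.

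The only genuine obstacle is bookkeeping. Because the leading quadratic parts cancel, the surviving linear coefficients are differences of large rational functions of $m$ and $n$, so one must carry enough terms: after clearing the denominators $(mn+n-1)$, $(mn+n-2)$, $(mn+n-3)$, etc., I would expand each rational function in the form $an^2+bn+O(1)$ by long division, check that the $a$'s agree between $\Expe[\,\cdot^2\,]$ and $(\Expe[\,\cdot\,])^2$ (resp.\ between $\Expe[W_nB_n]$ and $\Expe[W_n]\Expe[B_n]$), and then read off $b$. This is mechanical but arithmetic-slip-prone, and is the step that actually requires care; no further probabilistic input is needed beyond the earlier propositions.
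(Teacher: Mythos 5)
Your proposal is correct and matches the paper's (implicit) argument: the corollary is obtained exactly by translating the first and second moments of $(W_n,B_n)$ from Propositions~\ref{Prop:moments} and~\ref{Prop:secmoments} (together with $\Expe[W_nB_n]$ from the latter's proof) through the relations~(\ref{Eq:relationYB}), with the quadratic terms cancelling so that the linear coefficients survive. The scalings $1$, $\tfrac12$, $\tfrac14$ and the observation that the $O(1)$ slack in $\Expe[B_n^2]$ is harmless are exactly right.
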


\subsection{Asymptotic joint distribution of the number of nodes of outdegrees~0 and 1}
\label{Subsec:multimartingale}
In this section, we employ the multivariate martingale central limit theorem to study the asymptotic joint distribution of the number of nodes with outdegree $0$ and $1$. Different versions of multivariate martingale central limit theorem have been well developed and used to study \polya\ urn schemes and stochastic processes; see for example~\cite{Kholfi, Kuchler}. For the reader's convenience, we provide a handy reference in the following statement.

\begin{theorem}[Multivariate martingale central limit theorem]  
	\label{Thm:multiMCLT}
	Let $\{\vecM_{n}, \G_{n}\}$ be a zero-mean, square-integrable martingale array with differences $\vecX_{n}$. If for any $\varepsilon > 0$, we have
	\begin{align*}
		\sum_{j = 1}^{n} \Expe \left[\vecX^{\top}_{j} \vecX_{j} \indicator_{\{||\vecX_{j}||_2 > \varepsilon\}} \given \G_{j - 1} \right] &\covP 0,
		\\ \sum_{j = 1}^{n} \Expe \left[\vecX_{j} \vecX^{\top}_{j} \given \G_{j - 1}\right] &\covP \mSigma,
	\end{align*}
	then
	$$\vecM_{n} = \sum_{j = 1}^{n} \vecX_{j} \covD \normal_{k}({\bf 0}, \mSigma),$$
	where $\top$ represents the transpose of a matrix, and $||\cdot||_2$ is 
	the 2-norm of a matrix, and $\normal_{k}({\bf 0}, \mSigma)$ is a $k$--dimensional normal vector with mean ${\bf 0}$ and covariance matrix $\mSigma$.
\end{theorem}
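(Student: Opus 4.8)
The plan is to reduce the $k$-dimensional statement to the classical scalar martingale central limit theorem by the Cram\'er--Wold device. Fix an arbitrary vector $\vecu \in \mathbb{R}^{k}$; it is enough to show $\vecu^{\top}\vecM_{n} \covD \normal(0, \vecu^{\top}\mSigma\vecu)$, because establishing this for every $\vecu$ pins the limiting law of $\vecM_{n}$ down to $\normal_{k}({\bf 0}, \mSigma)$. First I would observe that $\{\vecu^{\top}\vecM_{n}, \G_{n}\}$ is itself a zero-mean, square-integrable (scalar) martingale array, with martingale differences $\vecu^{\top}\vecX_{j}$; linearity and the tower property transfer these structural facts at once.

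Next I would check the two hypotheses of the scalar martingale central limit theorem for $\{\vecu^{\top}\vecM_{n}\}$. For the conditional variance condition, linearity gives
$$\sum_{j=1}^{n}\Expe\bigl[(\vecu^{\top}\vecX_{j})^{2} \given \G_{j-1}\bigr] = \vecu^{\top}\Bigl(\sum_{j=1}^{n}\Expe[\vecX_{j}\vecX_{j}^{\top}\given\G_{j-1}]\Bigr)\vecu,$$
and since $\mA \mapsto \vecu^{\top}\mA\vecu$ is continuous, the second hypothesis yields convergence in probability of the right-hand side to $\vecu^{\top}\mSigma\vecu$. For the conditional Lindeberg condition, rescale so that $\|\vecu\|_{2}=1$ (the case $\vecu={\bf 0}$ being trivial); then Cauchy--Schwarz gives $(\vecu^{\top}\vecX_{j})^{2}\le\vecX_{j}^{\top}\vecX_{j}$ pointwise and $\{|\vecu^{\top}\vecX_{j}|>\varepsilon\}\subseteq\{\|\vecX_{j}\|_{2}>\varepsilon\}$, so
$$\sum_{j=1}^{n}\Expe\bigl[(\vecu^{\top}\vecX_{j})^{2}\indicator_{\{|\vecu^{\top}\vecX_{j}|>\varepsilon\}}\given\G_{j-1}\bigr] \le \sum_{j=1}^{n}\Expe\bigl[\vecX_{j}^{\top}\vecX_{j}\,\indicator_{\{\|\vecX_{j}\|_{2}>\varepsilon\}}\given\G_{j-1}\bigr]\covP 0$$
by the first hypothesis. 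The scalar theorem then delivers $\vecu^{\top}\vecM_{n}\covD\normal(0,\vecu^{\top}\mSigma\vecu)$, with a degenerate ($\vecu^{\top}\mSigma\vecu=0$) limit read as a point mass at $0$; one final appeal to Cram\'er--Wold completes the proof.

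The structural transfer and the continuous-mapping step are routine; the one spot that genuinely needs care is the Lindeberg transfer, where it is essential to enlarge the truncation event from $\{|\vecu^{\top}\vecX_{j}|>\varepsilon\}$ to $\{\|\vecX_{j}\|_{2}>\varepsilon\}$ before dominating --- otherwise the scalar truncated second moments are not obviously controlled by the vector-valued hypothesis. If one preferred a self-contained proof, the alternative is a direct characteristic-function telescoping: expand $\Expe\bigl[\exp(i\,{\bf t}^{\top}\vecM_{n})\bigr]$ over the differences, approximate each conditional increment characteristic function by $1-\tfrac12\,{\bf t}^{\top}\Expe[\vecX_{j}\vecX_{j}^{\top}\mid\G_{j-1}]\,{\bf t}$, and bound the aggregate remainder using the Lindeberg hypothesis; but the Cram\'er--Wold reduction above is shorter and is the route I would take.
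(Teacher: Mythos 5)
Your proposal is correct, but note that the paper does not actually prove this theorem: it is quoted as a reference result, imported from the cited literature (K\"{u}chler--S\o rensen and Kholfi), so there is no in-paper proof to compare against. Your Cram\'er--Wold reduction to the scalar Hall--Heyde martingale CLT is precisely the standard argument behind those references, and every step checks out: $\vecu^{\top}\vecM_{n}$ inherits the zero-mean square-integrable martingale structure; the conditional variance condition transfers by bilinearity and continuity of $\mA \mapsto \vecu^{\top}\mA\vecu$; and the Lindeberg transfer is handled correctly, since for $\|\vecu\|_{2}=1$ one has both $(\vecu^{\top}\vecX_{j})^{2}\le \vecX_{j}^{\top}\vecX_{j}$ and $\{|\vecu^{\top}\vecX_{j}|>\varepsilon\}\subseteq\{\|\vecX_{j}\|_{2}>\varepsilon\}$, so the scalar Lindeberg sum is dominated by the vector one, and a nonnegative sequence dominated by one tending to $0$ in probability also tends to $0$ in probability (for general $\vecu$ the truncation level becomes $\varepsilon/\|\vecu\|_{2}$, which is harmless because the hypothesis holds for every $\varepsilon>0$). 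You also correctly flag the only delicate point, namely enlarging the truncation event before dominating. Two small remarks: the scalar limit law is a genuine normal (rather than a mixture of normals) precisely because $\mSigma$ here is deterministic, which is worth saying explicitly when invoking the Hall--Heyde result; and since the statement concerns a single martingale sequence rather than a triangular array, the nesting condition on the $\sigma$-fields that the array version of the scalar theorem requires does not arise. With those caveats noted, your argument is a complete and standard proof of the quoted theorem.
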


The first in-probability convergence in Theorem~\ref{Thm:multiMCLT} is known as the {\em conditional Lindeberg's condition}; whereas the second in-probability convergence is called the {\em conditional variance condition}.

According to the recurrence relations~(\ref{Eq:recW}) and~(\ref{Eq:recB}), we have
\begin{align*}
	\Expe
	\left[
	\begin{pmatrix}
		W_n
		\\ B_n
	\end{pmatrix}
	\Biggiven
	\F_{n - 1,0} \right]
	&= \begin{pmatrix}
		\frac{(m + 1)(n - 1)}{mn + n - 1} & 0
		\\ \frac{2m(m + 1)(n - 1)}{(mn + n - 2)(mn + n - 1)} & \frac{(m + 1)(n - 1)(mn - m + n - 2)}{(mn + n - 2)(mn + n - 1)}
	\end{pmatrix}
	\\ &\qquad{} \times \begin{pmatrix}
		W_{n - 1}
		\\ B_{n - 1}
	\end{pmatrix}
	+ \begin{pmatrix}
		1
		\\ 0
	\end{pmatrix}.
\end{align*}
Thus, $\left(\begin{smallmatrix}
W_n \\ B_n 
\end{smallmatrix}\right)$ is not a martingale array. To compactify  the notation, let us call the coefficient matrix above $\mA_n$. We manually construct a martingale structure for $\left(\begin{smallmatrix}
W_n \\ B_n
\end{smallmatrix}\right)$ in the following lemma.
\begin{lemma}
	\label{Lem:multimartingale}
	\begin{align*}
		\vecM_n &= \begin{pmatrix}
			\frac{\Gamma \left(\frac{mn + m + n}{m + 1}\right)}{\Gamma(n) \, \Gamma \left(\frac{2m + 1}{m + 1}\right)} & 0
			\\ \\ \frac{2 \, \Gamma\left(\frac{mn + m + n}{m + 1}\right)}{\Gamma(n) \, \Gamma \left(\frac{2m + 1}{m + 1}\right)} - \frac{2(mn + n - 1) \, \Gamma \left(\frac{mn + m + n - 1}{m + 1}\right)}{m \, \Gamma(n) \, \Gamma \left(\frac{2m}{m + 1}\right)} & \frac{(mn + n - 1) \, \Gamma \left(\frac{mn + m + n - 1}{m + 1}\right)}{m \, \Gamma(n) \, \Gamma \left(\frac{2m}{m + 1}\right)}
		\end{pmatrix}
		\\ &\qquad \times \begin{pmatrix}
			W_n
			\\ B_n
		\end{pmatrix} + 
		\begin{pmatrix}
			- \frac{\Gamma \left(\frac{mn + 2m + n + 1}{m + 1}\right)}{\Gamma(n) \, \Gamma \left(\frac{3m + 2}{m + 1}\right)} + 1
			\\ \\ \frac{(m + 1)(2mn + m + 2n - 1) \, \Gamma \left(\frac{mn + 2m + n}{m + 1}\right)}{m(3m + 1) \, \Gamma(n) \, \Gamma \left(\frac{2m}{m + 1}\right)} - \frac{2 \, \Gamma \left(\frac{mn + 2m + n + 1}{m + 1}\right)}{\Gamma(n) \, \Gamma \left(\frac{3m + 2}{m + 1}\right)}
		\end{pmatrix}
	\end{align*}
	is a martingale array for $n \ge 2$.
\end{lemma}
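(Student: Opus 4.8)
The plan is to verify directly that the stated vector $\vecM_n$ is a martingale array, i.e., that $\Expe[\vecM_n \given \F_{n-1,0}] = \vecM_{n-1}$ for $n \ge 2$, where the filtration is $\G_n = \F_{n,0}$. The construction is an instance of the standard ``multiplicative compensator'' trick for turning an affine stochastic recurrence into a martingale: given the recursion $\Expe\bigl[\left(\begin{smallmatrix} W_n \\ B_n\end{smallmatrix}\right) \given \F_{n-1,0}\bigr] = \mA_n \left(\begin{smallmatrix} W_{n-1} \\ B_{n-1}\end{smallmatrix}\right) + \left(\begin{smallmatrix} 1 \\ 0\end{smallmatrix}\right)$ from~(\ref{Eq:recW})--(\ref{Eq:recB}), one seeks lower-triangular matrices $\mQ_n$ (the prefactor multiplying $\left(\begin{smallmatrix} W_n \\ B_n\end{smallmatrix}\right)$) and vectors $\vecu_n$ (the additive correction) so that $\vecM_n = \mQ_n\left(\begin{smallmatrix} W_n \\ B_n\end{smallmatrix}\right) + \vecu_n$ satisfies the martingale identity. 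Imposing $\Expe[\vecM_n \given \F_{n-1,0}] = \vecM_{n-1}$ forces the two recurrences $\mQ_n \mA_n = \mQ_{n-1}$ and $\mQ_n \left(\begin{smallmatrix} 1 \\ 0\end{smallmatrix}\right) + \vecu_n = \vecu_{n-1}$; the claim is precisely that the displayed $\mQ_n$, $\vecu_n$ solve these.

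Concretely, I would proceed in three steps. First, compute $\Expe[\vecM_n \given \F_{n-1,0}]$ by linearity: since $\mQ_n$ and $\vecu_n$ are deterministic (functions of $n$ only), this equals $\mQ_n\, \Expe\bigl[\left(\begin{smallmatrix} W_n \\ B_n\end{smallmatrix}\right) \given \F_{n-1,0}\bigr] + \vecu_n = \mQ_n \mA_n \left(\begin{smallmatrix} W_{n-1} \\ B_{n-1}\end{smallmatrix}\right) + \mQ_n\left(\begin{smallmatrix} 1 \\ 0\end{smallmatrix}\right) + \vecu_n$. Second, verify the matrix identity $\mQ_n \mA_n = \mQ_{n-1}$ entrywise. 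Because $\mA_n$ is lower triangular, the $(1,1)$ entry reduces to a one-term identity involving the ratio $\Gamma\!\left(\tfrac{mn+m+n}{m+1}\right)/\Gamma\!\left(\tfrac{m(n-1)+m+(n-1)}{m+1}\right)$ against the factor $\tfrac{(m+1)(n-1)}{mn+n-1}$, which is immediate from $\Gamma(z+1) = z\Gamma(z)$ after noting $\tfrac{mn+m+n}{m+1} = \tfrac{m(n-1)+m+(n-1)}{m+1} + 1$ and that $\tfrac{mn+m+n}{m+1}-1 = \tfrac{mn+n-1}{m+1}$. The $(2,2)$ entry is analogous using the $\Gamma(n)$ and $\Gamma\!\left(\tfrac{2m}{m+1}\right)$ factors, and the $(2,1)$ entry is where the linear combination of two Gamma terms in $\mQ_n$ row~2 conspires with the off-diagonal entry $\tfrac{2m(m+1)(n-1)}{(mn+n-2)(mn+n-1)}$ of $\mA_n$; here one must also use the $(1,1)$ and $(2,2)$ relations already established. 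Third, verify the additive identity $\vecu_{n-1} - \vecu_n = \mQ_n\left(\begin{smallmatrix} 1 \\ 0\end{smallmatrix}\right)$, which is just the first column of $\mQ_n$: the first component is $\tfrac{\Gamma\left(\frac{mn+m+n}{m+1}\right)}{\Gamma(n)\,\Gamma\left(\frac{2m+1}{m+1}\right)}$ and the second is the messy difference in row~2, column~1 of $\mQ_n$; both telescoping checks again reduce to the functional equation for $\Gamma$.

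It is also worth confirming the integrability hypothesis so that ``martingale array'' is literally justified: $W_n$ and $B_n$ are bounded by $\tau_n$ (deterministic), hence square-integrable, so $\vecM_n \in L^1$ for every $n$, and adaptedness to $\G_n = \F_{n,0}$ is clear by construction. I would state this in a sentence before the computation. One subtlety to flag: the recurrences~(\ref{Eq:recW})--(\ref{Eq:recB}) were derived for $m \ge 2$ (the case $s=1$ vs.\ the bottom line of~(\ref{Eq:Port})), and the denominators $mn+n-2$, $mn+n-4$, etc., must be nonzero, which forces $n \ge 2$ (and in fact the Gamma arguments such as $\tfrac{2m}{m+1}$, $\tfrac{2m+1}{m+1}$ are positive, so no poles of $\Gamma$ are hit); this is exactly why the lemma is asserted only for $n \ge 2$.

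The main obstacle is purely computational: checking the $(2,1)$ entry of $\mQ_n \mA_n = \mQ_{n-1}$ and the second component of the additive identity, since these involve a two-term linear combination of Gamma-ratios with shifted arguments and the algebra of clearing the common denominators $(mn+n-2)(mn+n-1)$ is bulky. There is no conceptual difficulty — everything follows from $\Gamma(z+1)=z\Gamma(z)$ applied repeatedly and patient bookkeeping — but it is the kind of identity best verified either very carefully by hand or with a computer algebra system, and it is the only place where a sign or an index shift could go wrong. In the writeup I would present the diagonal checks in full (they are short) and indicate that the off-diagonal check proceeds ``by the same reduction to the Gamma functional equation,'' rather than reproducing every line.
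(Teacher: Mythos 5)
Your proposal is correct and rests on exactly the same decomposition as the paper: writing $\vecM_n$ as an affine transform of $\left(\begin{smallmatrix}W_n\\B_n\end{smallmatrix}\right)$ and reducing the martingale property to the two recurrences $\maP_n\mA_n=\maP_{n-1}$ and $\mQ_{n-1}-\mQ_n=\maP_n\left(\begin{smallmatrix}1\\0\end{smallmatrix}\right)$ (in the paper's notation, where $\maP_n$ is your matrix prefactor and $\mQ_n$ is your additive vector $\vecu_n$). The only difference in execution is that the paper solves these recurrences forward, evaluating the telescoping product $\maP_1\mA_2^{-1}\cdots\mA_n^{-1}$ by simultaneously diagonalizing all the $\mA_j^{-1}$ with the common eigenvector matrix $\left(\begin{smallmatrix}1&0\\2&-2\end{smallmatrix}\right)$ and then summing for $\mQ_n$, whereas you verify the stated closed forms entrywise via $\Gamma(z+1)=z\,\Gamma(z)$; both are valid, and your added remarks on integrability and the $n\ge 2$ restriction are correct.
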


\begin{proof}
	We apply a linear transformation to  $ \vecR_n = \left(\begin{smallmatrix}
	W_n \\ B_n
	\end{smallmatrix}\right)$,
	such that $\vecM_n = \maP_n \vecR_n + \mQ_n$ is a martingale array. By the fundamental property of martingales, we have
	\begin{align*}
		\Expe[\vecM_n \given \F_{n - 1,0}] &= \maP_n \Expe \left[\begin{pmatrix}
			W_n 
			\\ B_n
		\end{pmatrix} \Biggiven \F_{n - 1,0} \right] + \mQ_n\\
		&=\maP_n \left(\mA_n \vecR_{n-1} + \begin{pmatrix} 
			1
			\\ 0
		\end{pmatrix} \right) + \mQ_n\\
		&= \vecM_{n - 1}\\
		&= \maP_{n-1}\vecR_{n-1} + \mQ_{n-1},
	\end{align*}
	for all $n \ge 2$. We equate the coefficients of $\vecR_{n-1}$ 
	on the second and fourth lines of the chain of equalities 
	and get for $\maP_n$ the recursive relation
	$$\maP_n = \maP_{n - 1}\mA_n^{-1} = \maP_{n - 2}\mA_{n - 1}^{-1}\mA_n^{-1} = \cdots = \maP_{1}\mA_2^{-1} \cdots \mA_{n}^{-1},$$
	for an arbitrary choice of $\maP_1$. Without loss of generality, let $\maP_1$ be the identity matrix. We calculate the product of $\mA_n^{-1}$ by applying Jordan normal form to each term, which results in
	$$\mA_n^{-1} = \begin{pmatrix}
	1 & 0 \\ 2 & -2
	\end{pmatrix}
	\begin{pmatrix}
	\frac{mn + n - 1}{mn - m + n - 1} & 0
	\\ 0 & \frac{(mn + n - 1)(mn + n - 2)}{(n - 1)(m + 1)(mn - m + n - 2)}
	\end{pmatrix}
	\begin{pmatrix}
	1 & 0 \\ 2 & -2
	\end{pmatrix}^{-1}.$$
	Thus, we get a solution:
	$$\maP_n = \begin{pmatrix}
	\frac{\Gamma \left(\frac{mn + m + n}{m + 1}\right)}{\Gamma(n) \, \Gamma \left(\frac{2m + 1}{m + 1}\right)} & 0
	\\ \frac{2 \, \Gamma\left(\frac{mn + m + n}{m + 1}\right)}{\Gamma(n) \, \Gamma \left(\frac{2m + 1}{m + 1}\right)} - \frac{2(mn + n - 1) \, \Gamma \left(\frac{mn + m + n - 1}{m + 1}\right)}{m \, \Gamma(n) \, \Gamma \left(\frac{2m}{m + 1}\right)} & \frac{(mn + n - 1) \, \Gamma \left(\frac{mn + m + n - 1}{m + 1}\right)}{m \, \Gamma(n) \, \Gamma \left(\frac{2m}{m + 1}\right)}
	\end{pmatrix}.$$
	On the other hand, we equate the free terms and find
	$$\mQ_n = \mQ_{n - 1} - \maP_n  \begin{pmatrix}
	1 \\ 0
	\end{pmatrix} = \mQ_1 - \sum_{j = 2}^{n} \maP_j  \begin{pmatrix}
	1 \\ 0
	\end{pmatrix},$$
	for arbitrary choice of $\mQ_1$. Let $\mQ_1$ be the $2\times 2$ zero matrix; then we arrive at
	$$\mQ_n = \begin{pmatrix}
	- \frac{\Gamma \left(\frac{mn + 2m + n + 1}{m + 1}\right)}{\Gamma(n) \, \Gamma \left(\frac{3m + 2}{m + 1}\right)} + 1
	\\ \frac{(m + 1)(2mn + m + 2n - 1) \, \Gamma \left(\frac{mn + 2m + n}{m + 1}\right)}{m(3m + 1) \, \Gamma(n) \, \Gamma \left(\frac{2m}{m + 1}\right)} - \frac{2 \, \Gamma \left(\frac{mn + 2m + n + 1}{m + 1}\right)}{\Gamma(n) \, \Gamma \left(\frac{3m + 2}{m + 1}\right)}
	\end{pmatrix}. $$
\end{proof}
Note that  $\vecM_j - \left(\begin{smallmatrix} 1 \\ 0
\end{smallmatrix}\right)$ are centered bivariate martingales.
Let us take
$$\mK_n = \begin{pmatrix}
n^{-\frac{3m + 1}{2(m + 1)}} & 0
\\ 0 & n^{-\frac{5m + 1}{2(m + 1)}}
\end{pmatrix}.$$
The martingale differences are
$$\vecX_j = \vecM_j - \vecM_{j-1}.$$ 
So, $\mK_n \vecM_j$ are centered martingales, 
and  $\mK_n \vecX_j$ are martingale differences, too. 

Next, let us verify the conditional Lindeberg's condition and compute the covariance matrix $\mSigma$ stated in Theorem~\ref{Thm:multiMCLT}.

\begin{lemma} 
	\label{Lem:multiLindeberg}
	We have
	$$ \sum_{j = 1}^{n} \Expe [(\mK_n \vecX_{j})^{\top} (\mK_n \vecX_{j}) \indicator_{\{||\mK_n \vecX_{j}||_2 > \varepsilon\}} \given \F_{n - 1,0} ] \covP 0. $$
\end{lemma}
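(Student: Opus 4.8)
The plan is to verify the conditional Lindeberg condition by the standard route: show that the martingale differences $\mK_n \vecX_j$ are uniformly small, so small in fact that for $n$ large the indicator event $\{\|\mK_n\vecX_j\|_2 > \varepsilon\}$ is empty, whence the whole sum vanishes deterministically. Concretely, first I would obtain a crude almost-sure bound on $\vecX_j = \vecM_j - \vecM_{j-1}$. From the explicit form of $\vecM_n = \maP_n \vecR_n + \mQ_n$ in Lemma~\ref{Lem:multimartingale}, one has $\vecX_j = \maP_j \vecR_j - \maP_{j-1}\vecR_{j-1} + \mQ_j - \mQ_{j-1}$, and using $\maP_{j-1} = \maP_j \mA_j$ together with $\mQ_j - \mQ_{j-1} = -\maP_j\left(\begin{smallmatrix}1\\0\end{smallmatrix}\right)$ this collapses to $\vecX_j = \maP_j\bigl(\vecR_j - \mA_j \vecR_{j-1} - \left(\begin{smallmatrix}1\\0\end{smallmatrix}\right)\bigr)$. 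The bracketed vector is exactly the one-step ``noise'' in the recurrences~(\ref{Eq:recW}) and~(\ref{Eq:recB}); since $W_j, B_j$ change by $O(1)$ amounts per insertion (white external nodes can drop by at most $m$ and rise by $1$; blue by at most $2m$), and $\mA_j = I + O(1/j)$, the bracket is $O(1)$ almost surely, uniformly in $j$.

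Next I would track the growth of $\maP_j$ and $\mK_n$. By Stirling's formula applied to the Gamma-function entries of $\maP_j$ in Lemma~\ref{Lem:multimartingale}, the $(1,1)$ entry is of order $j^{(m)/(m+1)}=j^{1-1/(m+1)}$ --- more precisely $\Gamma\bigl(\tfrac{mj+m+j}{m+1}\bigr)/\Gamma(j) \asymp j^{\frac{m}{m+1}}$ --- and the $(2,2)$ entry, carrying the extra factor $(mj+n-1)$, is of order $j^{1+\frac{m-1}{m+1}} = j^{\frac{2m}{m+1}}$; the $(2,1)$ entry is dominated by the same $j^{\frac{2m}{m+1}}$ scale. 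Hence $\|\maP_j\vecX_j\text{-bracket}\|$ contributes at most $O(j^{\frac{m}{m+1}})$ in the first coordinate and $O(j^{\frac{2m}{m+1}})$ in the second. Wait --- I should be careful: what multiplies $\vecR_j$ in $\vecX_j$ is $\maP_j$, so actually $\|\vecX_j\|$ has first coordinate $O(j^{\frac{m}{m+1}})$ and second coordinate $O(j^{\frac{2m}{m+1}})$. Now apply $\mK_n$: the first coordinate of $\mK_n\vecX_j$ is $O\bigl(n^{-\frac{3m+1}{2(m+1)}} j^{\frac{m}{m+1}}\bigr)$ and, since $j \le n$ and $\frac{m}{m+1} = \frac{2m}{2(m+1)} < \frac{3m+1}{2(m+1)}$, this is $O\bigl(n^{\frac{2m-3m-1}{2(m+1)}}\bigr) = O\bigl(n^{-\frac{m+1}{2(m+1)}}\bigr) = O(n^{-1/2}) \to 0$; likewise the second coordinate is $O\bigl(n^{-\frac{5m+1}{2(m+1)}} j^{\frac{2m}{m+1}}\bigr) = O\bigl(n^{\frac{4m-5m-1}{2(m+1)}}\bigr) = O(n^{-1/2}) \to 0$. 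So $\|\mK_n\vecX_j\|_2 \le c\,n^{-1/2}$ uniformly over $1 \le j \le n$, almost surely.

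Consequently, for any fixed $\varepsilon > 0$ there is $N$ with $c\,n^{-1/2} < \varepsilon$ for $n \ge N$, so the indicator $\indicator_{\{\|\mK_n\vecX_j\|_2 > \varepsilon\}}$ is identically $0$ for all $j \le n$ once $n \ge N$; the sum in question is then exactly $\mathbf{0}$, and in particular it converges to $\mathbf{0}$ in probability (indeed almost surely, hence in $L_1$). I would close by noting that the interchange ``$\mK_n$ outside, $\vecX_j$ inside'' is harmless because $\mK_n$ is deterministic, and that the conditioning $\sigma$-field can be taken to be $\F_{n-1,0}$ (or the natural $\G_{j-1}$ of Theorem~\ref{Thm:multiMCLT}) without changing anything, since the bound is deterministic.

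The main obstacle is purely bookkeeping: getting the exponents in the Stirling asymptotics of the $\maP_j$ entries right and checking that in every coordinate the power of $n$ coming from $\mK_n$ strictly dominates the power of $j$ (evaluated at the worst case $j = n$) coming from $\maP_j$ --- i.e., that $\mK_n$ was chosen with exactly the right scaling exponents $\frac{3m+1}{2(m+1)}$ and $\frac{5m+1}{2(m+1)}$. There is no probabilistic subtlety once the deterministic $O(n^{-1/2})$ bound on $\|\mK_n\vecX_j\|_2$ is in hand; the Lindeberg sum does not merely go to zero, it is eventually exactly zero.
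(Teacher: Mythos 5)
Your proposal is correct and follows essentially the same route as the paper: both arguments reduce to a deterministic, uniform-in-$j$ bound on $\|\mK_n\vecX_j\|_2$ that tends to $0$ (you via the factorization $\vecX_j=\maP_j\bigl(\vecR_j-\mA_j\vecR_{j-1}-\left(\begin{smallmatrix}1\\0\end{smallmatrix}\right)\bigr)$ with bounded increments of $\vecR_j$ and Stirling asymptotics for $\maP_j$, the paper via a triangle-inequality split of $\maP_j\vecR_j-\maP_{j-1}\vecR_{j-1}+\mQ_j-\mQ_{j-1}$), so that the indicator events are empty for large $n$ and the Lindeberg sum is eventually identically zero. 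Your explicit $O(n^{-1/2})$ rate is a careful (and arguably more accurate) version of the paper's stated orders, and nothing essential differs.
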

\begin{proof}
	Use the fact that
	$$|W_j - W_{j - 1}| \le m - 1 \qquad \mbox{and} \qquad |B_j - B_{j - 1}| \le 2m,$$
	to find
	\begin{align*}
		||\mK_n \vecX_j||_2 &\le ||\mK_n\vecM_j - \mK_n\vecM_{j - 1}||_2
		\\ &= ||\mK_n\maP_j \vecR_j + \mK_n\mQ_j - (\mK_n\maP_{j - 1} \vecR_{j - 1} + \mK_n\mQ_{j - 1}) ||_2
		\\ &\le ||\mK_n (\maP_j - \maP_{j - 1})\vecR_j||_2 + ||\mK_n \maP_{j - 1}(\vecR_j - \vecR_{j - 1})||_2 
		\\&\qquad{}+ ||\mK_n (\mQ_j -\mQ_{j - 1})||_2
		\\&= O\bigl(n^{-\frac{m}{m + 1}}\bigr) + O\bigl(n^{-\frac{m}{m + 1}}\bigr) + O\bigl(n^{-1}\bigr)
		\\&\, \longrightarrow \, 0,
	\end{align*}
	as $n \to \infty.$ Thus, the events $\{||\mK_n \vecX_j||_2 > \varepsilon\} $ are empty for all $j$ and $\varepsilon > 0$. The conditional Lindeberg's condition is verified.
\end{proof}
\begin{lemma}
	\label{Lem:multicovariance}
	We have
	$$ \sum_{j = 1}^{n} \Expe [(\mK_n\vecX_{j})(\mK_n\vecX_{j})^{\top} \given \F_{j - 1,0} ] \covP \mSigma, $$
	where 
	$$\mSigma = \begin{pmatrix}
	\frac{2m^2}{(3m + 1) (m + 1) \, \Gamma^2\left(\frac{3m + 2}{m + 1}\right)} & - \frac{12m(m + 1)}{(3m + 1) (4m + 1) \, \Gamma\left(\frac{2m}{m + 1}\right) \, \Gamma\left(\frac{3m + 2}{m + 1}\right)}
	\\  \\ - \frac{12m(m + 1)}{(3m + 1) (4m + 1) \, \Gamma\left(\frac{2m}{m + 1}\right) \, \Gamma\left(\frac{3m + 2}{m + 1}\right)} & 
	\frac{24(m + 1)^3 (7m + 3)}{(3m + 1)^2(5m + 1)(2m + 1) \, \Gamma^2\left(\frac{2m}{m + 1}\right)}
	\end{pmatrix}.$$
\end{lemma}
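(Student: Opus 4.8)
The plan is to exploit the linear representation $\vecM_j=\maP_j\vecR_j+\mQ_j$ furnished by Lemma~\ref{Lem:multimartingale}, where $\vecR_j:=\left(\begin{smallmatrix}W_j\\ B_j\end{smallmatrix}\right)$. Since $\maP_j$ and $\mQ_j$ are deterministic and $\{\vecM_j\}$ is a martingale, the conditional mean $\Expe[\vecR_j\mid\F_{j-1,0}]$ is exactly the right-hand side of~(\ref{Eq:recW})--(\ref{Eq:recB}) and
\[
\vecX_j=\maP_j\bigl(\vecR_j-\Expe[\vecR_j\mid\F_{j-1,0}]\bigr).
\]
Because $\mK_n$ is diagonal, this gives the clean factorisation
\[
\Expe\bigl[(\mK_n\vecX_j)(\mK_n\vecX_j)^{\top}\mid\F_{j-1,0}\bigr]=\mK_n\maP_j\,\mathbf{\Lambda}_j\,\maP_j^{\top}\mK_n,\qquad \mathbf{\Lambda}_j:=\Cov\bigl(\vecR_j\mid\F_{j-1,0}\bigr),
\]
so that the lemma reduces to proving $\mK_n\bigl(\sum_{j=1}^{n}\maP_j\mathbf{\Lambda}_j\maP_j^{\top}\bigr)\mK_n\covP\mSigma$. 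The first job is therefore to evaluate $\mathbf{\Lambda}_j$.

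All the required ingredients are already available. The conditional means come from~(\ref{Eq:recW})--(\ref{Eq:recB}); the conditional second moments $\Expe[\wigW^2_{j-1,m}\mid\F_{j-1,0}]$, $\Expe[\wigB^2_{j-1,m}\mid\F_{j-1,0}]$ and the mixed moment $\Expe[\wigW_{j-1,m}\wigB_{j-1,m}\mid\F_{j-1,0}]$ are precisely the displays appearing in (or feeding into) the proofs of Propositions~\ref{Prop:samplemoments} and~\ref{Prop:secmoments}, together with the ``hiccup'' relations $W_j=\wigW_{j-1,m}+1$, $B_j=\wigB_{j-1,m}$. Subtracting products of the conditional means makes each entry of $\mathbf{\Lambda}_j$ an explicit rational function of $W_{j-1}$, $B_{j-1}$, $j$ and $m$; for instance a short simplification gives
\[
\Var\bigl(W_j\mid\F_{j-1,0}\bigr)=\frac{m(m+1)(j-1)}{\bigl((m+1)j-1\bigr)\bigl((m+1)j-2\bigr)}\,W_{j-1}\Bigl(1-\frac{W_{j-1}}{(m+1)j-1}\Bigr).
\]
Now invoke the laws of large numbers $W_{j-1}/j\covP\frac{m+1}{2m+1}$ and $B_{j-1}/j\covP\frac{2m(m+1)}{(3m+1)(2m+1)}$, which follow from Proposition~\ref{Prop:moments} together with the $O(n)$ bounds on $\Var(W_n)$ and $\Var(B_n)$ implicit in Corollary~\ref{Cor:cov}. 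Each entry of $\mathbf{\Lambda}_j$ then converges in probability to an explicit constant; the display above, say, yields $\Var(W_j\mid\F_{j-1,0})\covP\frac{2m^2}{(2m+1)^2}$. Call the limiting matrix $\mathbf{\Lambda}$.

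The third step is the asymptotics of the (now essentially deterministic) sum. Split $\mK_n\bigl(\sum_j\maP_j\mathbf{\Lambda}_j\maP_j^{\top}\bigr)\mK_n$ as $\mK_n\bigl(\sum_j\maP_j\mathbf{\Lambda}\maP_j^{\top}\bigr)\mK_n$ plus an error $\mK_n\bigl(\sum_j\maP_j(\mathbf{\Lambda}_j-\mathbf{\Lambda})\maP_j^{\top}\bigr)\mK_n$. By Stirling's formula the Gamma ratios in $\maP_j$ are polynomial, $\Gamma\bigl(\tfrac{(m+1)j+m}{m+1}\bigr)/\Gamma(j)\sim j^{m/(m+1)}$, and the $(2,1)$, $(2,2)$ entries of $\maP_j$ are of order $j^{2m/(m+1)}$, so each entry of $\mK_n\maP_j$ has the form $j^{\alpha}n^{-\alpha-1/2}$ with $\alpha\in\{\tfrac{m}{m+1},\tfrac{2m}{m+1}\}$; using $\sum_{j\le n}j^{2\alpha}\sim n^{2\alpha+1}/(2\alpha+1)$ we get $\sum_{j=1}^{n}\|\mK_n\maP_j\|^2=O(1)$. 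Cutting the error sum at a large fixed $J$ and using $\|\mK_n\maP_j\|\to0$ for each fixed $j$ together with $\|\mathbf{\Lambda}_j-\mathbf{\Lambda}\|\covP0$ for $j>J$ shows the error is $o_P(1)$. For the deterministic term, feed the Stirling asymptotics into the triple product $\maP_j\mathbf{\Lambda}\maP_j^{\top}$, apply $\sum_{j\le n}j^{\alpha}\sim n^{\alpha+1}/(\alpha+1)$ entrywise, cancel against $\mK_n$, and simplify using $\Gamma\bigl(\tfrac{3m+2}{m+1}\bigr)=\tfrac{2m+1}{m+1}\Gamma\bigl(\tfrac{2m+1}{m+1}\bigr)$; this delivers exactly $\mSigma$. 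As a consistency check, the off-diagonal entry of $\mSigma$ should be compatible with the limiting covariance $\tfrac1n\Cov(Y_n^{(0)},Y_n^{(1)})$ of Corollary~\ref{Cor:cov}.

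The main obstacle is not conceptual but computational. The truly tedious part is simplifying $\Var(B_j\mid\F_{j-1,0})$ and $\Cov(W_j,B_j\mid\F_{j-1,0})$, which inherit the unwieldy coefficients $\wigC_1,\dots,\wigC_5$ of Proposition~\ref{Prop:samplemoments}, then extracting their (comparatively simple) limits, and finally pushing the matrix product $\mK_n\maP_j\mathbf{\Lambda}\maP_j^{\top}\mK_n$ through the Stirling and power-sum estimates without error. This is the ``combinatorial explosion'' the introduction warns about; I would keep only leading-order terms in $j$ at every stage and defer the final constant-chasing to a symbolic computation.
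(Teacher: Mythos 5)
Your proposal is correct and follows essentially the same route as the paper: both express the martingale differences through $\maP_j$ and the conditional (co)variances of $(W_j,B_j)$ obtained from the moment computations of Propositions~\ref{Prop:samplemoments} and~\ref{Prop:secmoments}, replace $W_{j-1}/j$ and $B_{j-1}/j$ by their limiting constants, and finish with Stirling asymptotics for the Gamma ratios in $\maP_j$ together with the power-sum estimate $\sum_{j\le n}j^{2\alpha}\sim n^{2\alpha+1}/(2\alpha+1)$. Your factorisation $\Expe[(\mK_n\vecX_j)(\mK_n\vecX_j)^{\top}\mid\F_{j-1,0}]=\mK_n\maP_j\mathbf{\Lambda}_j\maP_j^{\top}\mK_n$ and the explicit $J$-truncation of the error term are a slightly cleaner packaging of what the paper does entrywise, and your leading-order constants check out against the paper's.
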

\begin{proof}
	Noticing that 
	$$(\mK_n \vecX_j)(\mK_n \vecX_j)^\top = \mK_n \vecX_j \vecX^\top_j \mK_n^\top,$$
	we first calculate the asymptotic values for the four entries in the covariance matrix $\mDelta_n = \sum_{j = 1}^{n} \Expe\bigr [\vecX_{j} \vecX^\top_{j}  \given \F_{n - 1,0} \bigr]$ one by one:
	\begin{align*}
		\mDelta_n(1, 1) &= \sum_{j = 1}^{n} \Expe \bigl[ (\maP_j(1, 1) W_j - \maP_{j - 1}(1, 1) W_{j - 1} + \mQ_j(1, 1) 
		\\ &\qquad\qquad{}- \mQ_{j - 1}(1, 1))^2 \given \F_{j - 1, 0}\bigr]
		\\ &= \sum_{j = 1}^{n} \frac{2m^2}{(m + 1)^2 \, \Gamma^2\left(\frac{3m + 2}{m + 1}\right)} \bigl(j^{\frac{2m}{m + 1}} + O\bigl(j^{\frac{2m - 1}{m + 1}}\bigr)\bigr)
		\\ &\sim \frac{2m^2}{(3m + 1) (m + 1) \, \Gamma^2\left(\frac{3m + 2}{m + 1}\right)} \, n^{\frac{3m + 1}{m + 1}}.
	\end{align*}
	The two entires on the antidiagonal of $\mDelta_n$ are the same, i.e., $\mDelta_{12} (n)= \mDelta_{21}(n)$, where
	\begin{align*}
		\mDelta_n(1, 2) &= \sum_{j = 1}^{n} \Expe\bigl[ (\maP_{j}(1, 1) W_j - \maP_{j - 1}(1, 1) W_{j - 1} + \mQ_{j}(1, 1) - \mQ_{j - 1}(1, 1))
		\\ &\qquad\qquad{}\times (\maP_{j}(2, 2) B_j - \maP_{j - 1}(2, 2) B_{j - 1} + \mQ_{j}(2, 1) 
		\\ &\qquad\qquad\qquad{}- \mQ_{j - 1}(2, 1)) \given \F_{j - 1,0}\bigr]
		\\ &= \sum_{j = 1}^{n} -\frac{12m}{(3m + 1) \, \Gamma\left(\frac{2m}{m + 1}\right) \, \Gamma\left(\frac{3m + 2}{m + 1}\right)} \bigl(j^{\frac{3m}{m + 1}} + O\bigl(j^{\frac{2m}{m + 1}}\bigr)\bigr)
		\\ &\sim - \frac{12m(m + 1)}{(3m + 1) (4m + 1) \, \Gamma\left(\frac{2m}{m + 1}\right) \, \Gamma\left(\frac{3m + 2}{m + 1}\right)} \, n^{\frac{4m + 1}{m + 1}}.
	\end{align*}
	We also have
	\begin{align*}
		\mDelta_{n} (2, 2) &= \sum_{j = 1}^{n} \Expe\bigl[(\maP_{j} (2, 1) W_j - \maP_{j - 1}(2, 1)W_{j - 1} + \maP_{j}(2, 2) B_j 
		\\ &\qquad{}- \maP_{j - 1}(2, 2) B_{j - 1} + \mQ_{j}(2, 1) - \mQ_{j - 1}(2, 1))^2 \given \F_{j - 1,0}\bigr]
		\\ &= \sum_{j = 1}^{n} \frac{24(m + 1)^2 (7m + 3)}{(3m + 1)^2(2m + 1) \, \Gamma\bigl(\frac{2m}{m + 1}\bigr)} \bigl(j^{\frac{4m}{m + 1}} + O\bigl(j^{\frac{3m}{m + 1}}\bigr)\bigr)
		\\ &\sim \frac{24(m + 1)^3 (7m + 3)}{(3m + 1)^2(5m + 1)(2m + 1) \, \Gamma^2\left(\frac{2m}{m + 1}\right)} \, n^{\frac{5m + 1}{m + 1}}.
	\end{align*}	 
	Thus, we obtain 
	\begin{align*}
		&\begin{pmatrix}
			n^{-\frac{3m + 1}{2(m + 1)}} & 0
			\\ 0 & n^{-\frac{5m + 1}{2(m + 1)}}
		\end{pmatrix}
		\begin{pmatrix}
			\mDelta_n(1, 1) & \mDelta_n(1, 2)
			\\ \mDelta_n(2, 1) & \mDelta_n(2, 2)
		\end{pmatrix}
		\begin{pmatrix}
			n^{-\frac{3m + 1}{2(m + 1)}} & 0
			\\ 0 & n^{-\frac{5m + 1}{2(m + 1)}}
		\end{pmatrix}^{\top}
		\\ &\qquad \qquad \qquad \almostsure \mSigma,
	\end{align*}
	which is a stronger convergence mode than the one stated in the theorem.
\end{proof}
Recall~(\ref{Eq:relationYB}) relating the target random variables and the external nodes. We determine the asymptotic joint distribution of $Y_n^{(0)}$ and  $Y_n^{(1)}$ in the following theorem.
\begin{theorem} 
	\label{Thm:multimartingaleclt}
	Let $Y_n^{(0)}$ and $Y_n^{(1)}$ respectively be the number of nodes of outdegree 0 and outdegree 1 of a preferential attachment circuit with index $m$ at time $n$. For $m \ge 2$, we have
	\begin{equation*}
		\frac{1}{\sqrt n}\left(\begin{pmatrix}
			Y_n^{(0)}
			\\ Y_n^{(1)} 
		\end{pmatrix}  -  \begin{pmatrix}
			\frac{m + 1}{2m + 1}
			\\ \frac{m(m + 1)}{(3m + 1)(2m + 1)}
		\end{pmatrix} \, n \right)
		\covD\
		\normal_2 \left( {\bf 0}, \tilde{\mSigma} \right),
	\end{equation*}
	where 
	$$ \tilde{\mSigma} = \begin{pmatrix}
	\frac{2m^2(m + 1)}{(3m + 1)(2m + 1)^2} & -\frac{4(m + 1)^2m^2}{(4m + 1)(3m + 1)(2m + 1)^2}
	\\ \\ -\frac{4(m + 1)^2m^2}{(4m + 1)(3m + 1)(2m + 1)^2} & \frac{2m^2(m + 1)(48m^3 + 59m^2 + 27m + 4)}{(5m + 1)(4m + 1)(3m + 1)^2(2m + 1)^2} \end{pmatrix}. $$
\end{theorem}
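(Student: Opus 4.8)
The plan is to feed the three lemmas just established into the martingale central limit theorem and then invert the linear transformation of Lemma~\ref{Lem:multimartingale}. First I would apply Theorem~\ref{Thm:multiMCLT} to the mean-zero, square-integrable martingale array $\{\vecM_j - (1,0)^\top\}$ with the deterministic rescaling $\mK_n$: the row-$n$ differences are $\mK_n\vecX_j$ for $1 \le j \le n$, the conditional Lindeberg condition for them is precisely Lemma~\ref{Lem:multiLindeberg}, and the conditional variance condition with limit $\mSigma$ is precisely Lemma~\ref{Lem:multicovariance}. Since $\sum_{j}\mK_n\vecX_j$ telescopes to $\mK_n\bigl(\vecM_n - (1,0)^\top\bigr)$ up to a boundary vector that is deterministic and bounded while $\mK_n \to \mathbf{0}$, Theorem~\ref{Thm:multiMCLT} together with Slutsky's lemma gives $\mK_n\bigl(\vecM_n - (1,0)^\top\bigr) \covD \normal_2(\mathbf{0},\mSigma)$.

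Next I would convert this into a statement about $\vecR_n = (W_n,B_n)^\top$. Because $\vecM_j - (1,0)^\top$ is mean-zero and $\mQ_n$ is deterministic, matching expectations in $\vecM_n = \maP_n\vecR_n + \mQ_n$ forces $\mQ_n = (1,0)^\top - \maP_n\Expe[\vecR_n]$, hence $\vecM_n - (1,0)^\top = \maP_n(\vecR_n - \Expe[\vecR_n])$ exactly, so that $\mK_n\maP_n(\vecR_n - \Expe[\vecR_n]) \covD \normal_2(\mathbf{0},\mSigma)$. A Stirling expansion of the $\Gamma$-ratios in $\maP_n$ shows that $\maP_n$ is lower triangular, with its $(1,1)$ entry growing like $n^{m/(m+1)}$ and both its $(2,1)$ and $(2,2)$ entries growing like $n^{2m/(m+1)}$; these two rates are matched on the nose by the two exponents $\tfrac{3m+1}{2(m+1)}$ and $\tfrac{5m+1}{2(m+1)}$ built into $\mK_n$, so that $\mathbf{L}_n := \sqrt{n}\,\mK_n\maP_n$ converges to a fixed lower-triangular matrix $\mathbf{C}$ with nonzero diagonal entries $1/\Gamma\bigl(\tfrac{2m+1}{m+1}\bigr)$ and $(m+1)/\bigl(m\,\Gamma\bigl(\tfrac{2m}{m+1}\bigr)\bigr)$, hence invertible. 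Writing $\mK_n\maP_n(\vecR_n - \Expe[\vecR_n]) = \mathbf{L}_n\bigl(\tfrac{1}{\sqrt n}(\vecR_n - \Expe[\vecR_n])\bigr)$ and applying Slutsky's lemma to $\mathbf{L}_n^{-1} \to \mathbf{C}^{-1}$, I get $\tfrac{1}{\sqrt n}(\vecR_n - \Expe[\vecR_n]) \covD \normal_2\bigl(\mathbf{0},\, \mathbf{C}^{-1}\mSigma(\mathbf{C}^{-1})^\top\bigr)$.

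Finally I would pass from $(W_n,B_n)$ to $(Y_n^{(0)},Y_n^{(1)})$. By~(\ref{Eq:relationYB}) the target vector is $\mathbf{D}\vecR_n$ with $\mathbf{D} = \mathrm{diag}(1,\tfrac12)$, so $\tfrac{1}{\sqrt n}\bigl((Y_n^{(0)},Y_n^{(1)})^\top - \Expe[(Y_n^{(0)},Y_n^{(1)})^\top]\bigr) \covD \normal_2(\mathbf{0},\tilde{\mSigma})$ with $\tilde{\mSigma} = \mathbf{D}\,\mathbf{C}^{-1}\mSigma(\mathbf{C}^{-1})^\top\mathbf{D}^\top$; and by Proposition~\ref{Prop:moments} the exact mean vector differs from $\bigl(\tfrac{m+1}{2m+1},\ \tfrac{m(m+1)}{(3m+1)(2m+1)}\bigr)^\top n$ only by an $O(1)$ term, which is $o(\sqrt n)$, so one more application of Slutsky's lemma replaces the centering by the displayed linear one. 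It then remains to verify that $\mathbf{D}\,\mathbf{C}^{-1}\mSigma(\mathbf{C}^{-1})^\top\mathbf{D}^\top$ equals the matrix $\tilde{\mSigma}$ in the statement; this is a finite computation in which the $\Gamma$-factors cancel via the recurrence $\Gamma(x+1) = x\Gamma(x)$ (for instance $\Gamma\bigl(\tfrac{3m+2}{m+1}\bigr) = \tfrac{2m+1}{m+1}\,\Gamma\bigl(\tfrac{2m+1}{m+1}\bigr)$), leaving rational functions of $m$, and it is reassuring that the answer must coincide with $\lim_n \tfrac{1}{n}\Cov(Y_n^{(0)},Y_n^{(1)})$ already evaluated in Corollary~\ref{Cor:cov}, which is exactly the displayed $\tilde{\mSigma}$.

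The genuinely delicate point, rather than any single hard estimate, is the exponent matching in the second step: if the two powers of $n$ in $\mK_n$ did not cancel the two different $\Gamma$-ratio growth rates in the rows of $\maP_n$ exactly, the limit $\mathbf{C}$ of $\sqrt{n}\,\mK_n\maP_n$ would be singular (or zero) and the de-normalization would collapse. Verifying this matching through Stirling's formula, confirming invertibility of $\mathbf{C}$, and pushing through the final $2\times2$ congruence $\mathbf{D}\mathbf{C}^{-1}\mSigma(\mathbf{C}^{-1})^\top\mathbf{D}^\top$ to recover the stated $\tilde{\mSigma}$ is the bulk of the remaining labor; everything upstream is handed to us by Lemmas~\ref{Lem:multimartingale}--\ref{Lem:multicovariance} and Theorem~\ref{Thm:multiMCLT}.
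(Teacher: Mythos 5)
Your proposal is correct and follows essentially the same route as the paper: it feeds Lemmas~\ref{Lem:multiLindeberg} and~\ref{Lem:multicovariance} into Theorem~\ref{Thm:multiMCLT} to obtain $\mK_n\bigl(\vecM_n - (1,0)^\top\bigr) \covD \normal_2(\mathbf{0},\mSigma)$, and then undoes the affine transformation of Lemma~\ref{Lem:multimartingale} via the Stirling asymptotics of $\maP_n$ and $\mQ_n$ together with Slutsky's lemma. The only difference is expository: you make explicit the de-normalization that the paper compresses into ``use the asymptotics of $\maP_n$ and $\mQ_n$,'' namely the convergence $\sqrt{n}\,\mK_n\maP_n \to \mathbf{C}$ with $\mathbf{C}$ invertible and the final congruence $\mathbf{D}\mathbf{C}^{-1}\mSigma(\mathbf{C}^{-1})^\top\mathbf{D}^\top = \tilde{\mSigma}$, which indeed matches Corollary~\ref{Cor:cov}.
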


\begin{proof}
	By the 
	multivariate martingale central limit theorem, we have
	$$\mK_n \left(\vecM_n - \begin{pmatrix}
	1 \\ 0
	\end{pmatrix}\right) \covD \normal_2 \left({\bf 0}, \mSigma \right),$$
	where $\mSigma$ is the covariance matrix given in Lemma~\ref{Lem:multicovariance}. Use the asymptotics of $\maP_n$ and $\mQ_n$, and we arrive at
	\begin{align*}
		&\frac{1}{\sqrt{n}} \left(\begin{pmatrix}
			\frac{1}{\Gamma \left(\frac{2m + 1}{m + 1}\right)} & 0
			\\  -\frac{2(m + 1)}{m \, \Gamma\left(\frac{2m}{m + 1}\right)} & \frac{m + 1}{m \, \Gamma \left(\frac{2m}{m + 1}\right)}\end{pmatrix}
		\begin{pmatrix}
			Y_n^{(0)} \\ 2Y_n^{(1)}
		\end{pmatrix}
		+ \begin{pmatrix}
			-\frac{1}{\Gamma \left(\frac{3m + 2}{m + 1}\right)}
			\\ \frac{2(m + 1)^2}{m(3m + 1) \, \Gamma \left(\frac{2m}{m + 1}\right)}
		\end{pmatrix} \, n \right) 
		\\ &\qquad{}\, \longrightarrow \, \normal_2 ({\bf 0}, \mSigma).
	\end{align*}
	This is equivalent to the stated convergence in distribution.
\end{proof}

\section{Degree Profile}
\label{Sec:degprofile}

In this section, we study the evolution of the degree of a node as the circuit ages. Let $D^{(m)}_{j,n}$, for $j = 0, \ldots n$, be the degree of node $j$ in a preferential attachment circuit of index $m$ and of age $n$. The random variables $D^{(m)}_{j,n}$, for $j = 0, \ldots, n$, describe a profile of degrees in the random circuit. We shall cast the results in terms of Pochhammer's symbol for the rising factorial, defined as follows:
$$ \langle x \rangle_s = x(x + 1)(x + 2)\cdots(x + s - 1), $$
for any $x \in \mathbb{R}$, and any integer $s \ge 0$, with the interpretation that $\langle x \rangle_0 = 1$. 

The growth of $D^{(m)}_{j,n}$, the degree of node $j$, can be monitored via a series of $(n-j)$ different
\polya\ urns with  adjustments (that we call ``hiccups'') in between. So, the \polya\ urn is a fundamental tool in the study, and we say a quick word about it.

\subsection{\polya\ urns}
\label{Subsec:Polya}
A two-color {\em \polya\ urn scheme} is an urn 
containing balls of up to two different colors (say white and blue for example). At each point of discrete time, we draw a ball from the urn at random, 
observe its color and put it back in the urn, then execute some ball
addition rules represented 
by a $2 \times 2$ 
{\em replacement matrix}:
$$ \begin{pmatrix}
a_{1, 1} & a_{1, 2} 
\\
a_{2, 1} & a_{2, 2} 
\end{pmatrix},$$
in which the rows from top to bottom are indexed by white and blue, and  the columns from left to right are also indexed by white and blue;
entry $a_{i, k}$ is the number of balls of color $k$ that we add upon withdrawing a ball of color $i$. We refer readers to the text~\cite{Mahmoud2008} for more details.

The special case of \polya\ urns with the replacement matrix
$$ \begin{pmatrix}
1 & 0 
\\
0 & 1 
\end{pmatrix},$$
is called {\em \polya-Eggenberger urn}, and a series of urns of this flavor will appear in the study
of the degree profile. 
The \polya-Eggenberger urn has been thoroughly studied and much is known about its exact and asymptotic behavior.
The particular result we need is the following. 
Let $B_n$ be the number of blue balls in  a \polya-Eggenberger urn starting with $W_0$ white
balls and $B_0$ blue balls. With $W_0 +B_0 > 0$ (a nonempty starting condition), we have
\begin{equation*}
	\Prob (B_n = k) = \binom{m}{k} \frac{\langle W_0 \rangle_{m - k} \langle B_0 \rangle_{k}}{\langle W_0 + B_0 \rangle_m}; 
\end{equation*}
see~\cite{Mahmoud2008}.

\polya-Eggenberger urns enjoy an exchangeability property that is helpful to our analysis---the order of choosing white and blue balls in all $m$-long sequences 
does not matter in the least; all sequences
that have the same number of white balls in them have the same probability. 
For example, the sequence $WWBWBW$ has the same probability as the sequence $BBWWWW$, where $W$'s and $B$'s stand for white and blue choices.

\subsection{The degree profile via \polya\ urns}
\label{Subsec:degree}
Upon inserting the $n$th node (i.e., at time $n > j$), we allocate node $n$ and choose $m$ parent nodes (dynamically, one by one as discussed) for it. Each time node $j$ appears in the dynamic sampling, its degree goes up by one; otherwise, its degree remains the same. The transition from the $(n - 1)$th to the $n$th insertion coincides with the working of a two-color \polya-Eggenbegrer urn scheme, as we shall shortly discuss. 
We would like to alert the reader to that we are not employing one continual urn underlying the circuit growth process. Rather, for each transition to add a new node there is an underlying urn. Right after the insertion of node $n$
in the circuit, we have used $n$ different urns, and the circuit experiences ``hiccups'' in between, jolting the contents of an urn at an insertion to become the starting
conditions for the next urn used in the next insertion. 
The notion of a hiccup will be made precise in the following
paragraphs.

Let us color external nodes attached to node $j$ with white, and color those
attached to other nodes with blue. 
We reuse the notation
$W_n$ and $B_n$, but here they mean the number of white external nodes dangling out of node $j$ after $n$ insertions, and the number of blue balls dangling out of all the other nodes after $n$ insertions. As the originator does not have parents, while every other node has $m$ parents, we have the relation
\begin{equation*}
	D^{(m)}_{j,n} = 
	\begin{cases}
		W_n - 1, & j = 0;
		\\ W_n - 1 + m, & j > 0.
	\end{cases}
\end{equation*} 
For compactness, we write this double-decker expression in one line with the aid of Kronecker's delta $\delta_{.,.}$, so it takes the form
\begin{equation}
\label{degreerelation}
D_{j,n}^{(m)} = W_n - 1 + m(1 - \delta_{j,0}).
\end{equation}
Consider the sample collected for inserting node $n$ (at time $n > j$), which chooses $m$ parents for the newcomer. At this point, the circuit has $W_{n-1}$ white external nodes and $B_{n-1}$ blue external nodes. When a white external node is selected, one white external node is added to the circuit; whereas when a blue external node is selected, one blue external node is adjoined 
to the circuit. This goes on for~$m$ steps. These additions 
are just like the dynamics of a standard \polya-Eggenberger urn starting with $W_{n-1}$ white balls and $B_{n-1}$ blue balls and evolving in $m$ draws (where one ball is sampled in each drawing), see~\cite{Mahmoud2008}.

And now comes the hiccup we alluded to. The urn after $m$ ball additions (of either color) is not the right urn to model the next $m$ parent selections, without an adjustment. At the end of the collection of the sample of the $n$th insertion (i.e., after all~$m$ parents are selected), the newly added node (i.e., node $n$) acquires one additional blue external node. So, the next urn that models the transition from the $n$th insertion to the $(n + 1)$st insertion needs one extra blue ball than those at the end of adding $m$ balls to the urn used in modeling step $n$.
\begin{theorem}
	\label{Thm:degreedistribution}
	Let $D_{n, j}^{(m)}$ be the degree of node $j$ of a preferential dynamic attachment circuit with index $m$ at time $n$. For $n \ge j$, we have
	\begin{align*}
		\Prob \big( D_{n, j}^{(m)} = d \big) &= (d - m(1 - \delta_{j,0}))!\, \sum_{\substack{b_1 + \cdots +  b_{n - j} = m(n - j + 1 - \delta_{j,0}) - d \\ 0 \le b_1 \le \min\{m, (m + 1)j\}}} \left( \prod_{r = 1}^{n - j}\binom{m}{b_r} \right) \\ 
		&\qquad{} \times  \frac{\prod_{r = 1}^{n - j} \langle (m + 1)j + \sum_{\ell = 1}^{r - 1} b_\ell + r - 1 \rangle_{b_r} }{\prod_{r = 0}^{n - j - 1}\langle (m + 1)(j + r) + 1 \rangle_m}.
	\end{align*}
\end{theorem}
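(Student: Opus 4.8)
The plan is to track the degree of node $j$ through the $n-j$ consecutive \polya-Eggenberger urns (with hiccups) described in Subsection~\ref{Subsec:degree}, and to compute the probability by summing over all ways of distributing the ``white'' draws among those urns. Fix $j$ and let $r=1,\dots,n-j$ index the transition from inserting node $j+r-1$ to node $j+r$. Let $b_r$ be the number of blue balls drawn during the $r$th urn (equivalently, the number of the $m$ parents of node $j+r$ that are \emph{not} node $j$); then $m-b_r$ is the number of white draws, each of which adds one white external node to node $j$. First I would set up the bookkeeping: by~(\ref{degreerelation}), $D_{n,j}^{(m)}=W_n-1+m(1-\delta_{j,0})$, and the total number of white balls added over all $n-j$ urns is $\sum_{r=1}^{n-j}(m-b_r)=m(n-j)-\sum_r b_r$. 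Starting from $W_j$ (the count of white external nodes at node $j$ right after node $j$ is inserted), one checks $W_j = 1$ if $j=0$ and $W_j = m+1$ if $j>0$, i.e. $W_j = m(1-\delta_{j,0})+1$; hence $W_n = m(1-\delta_{j,0})+1+m(n-j)-\sum_r b_r$, and setting this equal to $d-m(1-\delta_{j,0})+1$ gives the constraint $\sum_{r=1}^{n-j} b_r = m(n-j+1-\delta_{j,0})-d$, which matches the summation index in the statement. The factor $(d-m(1-\delta_{j,0}))!$ will emerge as the reciprocal of a multinomial normalization, as explained below.

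Next I would compute, for a \emph{fixed} vector $(b_1,\dots,b_{n-j})$, the probability that exactly $b_r$ blue balls are drawn in urn $r$, simultaneously for all $r$. Here I use the exchangeability property of the \polya-Eggenberger urn stressed in Subsection~\ref{Subsec:Polya}: within urn $r$ the order of white/blue draws does not affect the probability, so the chance of drawing some particular sequence with $b_r$ blue and $m-b_r$ white draws is a single product of the successive drawing probabilities. The key input is the state at the start of urn $r$: by the hiccup rule, the blue count entering urn $r$ is the blue count at the end of urn $r-1$ plus one. Tracking this, the total ball count (white plus blue) at the start of urn $r$ is $\tau_{j+r-1}=(m+1)(j+r-1)+1$ by~(\ref{Eq:totalexternal}), and the white count entering urn $r$ is $W_{j+r-1}=m(1-\delta_{j,0})+1+m(r-1)-\sum_{\ell=1}^{r-1} b_\ell$; equivalently the blue count entering urn $r$ is $(m+1)j + \sum_{\ell=1}^{r-1} b_\ell + (r-1)$. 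I would then write the probability of getting exactly $b_r$ blue in urn $r$, conditioned on the entering state, as $\binom{m}{b_r}$ times a ratio of rising factorials: the numerator picks up $\langle \text{blue count entering urn } r\rangle_{b_r}$ for the $b_r$ successful blue draws, the complementary white factor telescopes across the urns against the denominator, and the common denominator across all $m$ draws of urn $r$ is $\langle \tau_{j+r-1}\rangle_m = \langle (m+1)(j+r-1)+1\rangle_m$. Multiplying over $r=1,\dots,n-j$, the white rising-factorial numerators combine across urns (because $W$ increases by one per white draw and resets upward by one at each hiccup) into a single block that, together with the factorial prefactor, produces the clean $(d-m(1-\delta_{j,0}))!\,\prod_r\binom{m}{b_r}$ times $\dfrac{\prod_{r=1}^{n-j}\langle (m+1)j+\sum_{\ell=1}^{r-1}b_\ell+r-1\rangle_{b_r}}{\prod_{r=0}^{n-j-1}\langle (m+1)(j+r)+1\rangle_m}$ displayed in the theorem. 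Finally, summing over all admissible $(b_1,\dots,b_{n-j})$ with $\sum_r b_r$ fixed as above — and noting the extra constraint $0\le b_1\le\min\{m,(m+1)j\}$, which simply says urn~$1$ cannot draw more blue balls than it contains ($(m+1)j$ blue at the moment node $j$ is inserted) — gives $\Prob(D_{n,j}^{(m)}=d)$.

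The main obstacle I expect is the telescoping/reassembly of the white rising-factorial numerators across the $n-j$ urns, in the presence of the hiccups. Each urn's contribution naturally factors as (blue rising factorial) $\times$ (white rising factorial) / (total rising factorial of length $m$), and the blue and total pieces are already in the form claimed; but the white pieces from consecutive urns have shifted arguments (because of the $+1$ bump to the total at each hiccup and because $W$ grows draw-by-draw within an urn), and one must verify that they concatenate \emph{exactly} into $\langle W_j\rangle_{\,d-m(1-\delta_{j,0})}$ — which equals $\langle W_j\rangle_{W_n-W_j}$ — with no gaps or overlaps, so that $\langle W_j\rangle_{W_n-W_j}\big/\big(\text{stuff}\big)$ collapses to the stated $(d-m(1-\delta_{j,0}))!$ after one also accounts for the $\binom{m}{b_r}$ count of orderings inside each urn. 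This is a careful but routine induction on $n-j$: assume the formula after $n-1$ insertions (as a statement about the joint law of $W_{n-1}$ and the history, or directly about $D_{n-1,j}^{(m)}$), condition on the $n$th urn contributing $b_{n-j}$ blue draws, apply the \polya-Eggenberger single-urn probability together with exchangeability, re-index, and match terms. The $j=0$ versus $j>0$ distinction is handled uniformly by the $\delta_{j,0}$ already threaded through $W_j$ and~(\ref{degreerelation}); the base case $n=j$ is the degenerate urn with $\Prob(D_{j,j}^{(m)}=m(1-\delta_{j,0}))=1$, consistent with the empty product and $d=m(1-\delta_{j,0})$.
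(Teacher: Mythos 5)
Your overall strategy coincides with the paper's: run a separate \polya-Eggenberger urn for each of the $n-j$ insertions after node $j$ (with the hiccup adjusting the blue count between urns), use exchangeability to write the probability of a fixed vector $(b_1,\dots,b_{n-j})$ as a product of one-urn factors, let the white rising factorials across all urns concatenate into a single factorial, and sum over admissible tuples. The paper builds the joint law of the blue counts $X_{j+1},\dots,X_n$ insertion by insertion and then sums, which is exactly your plan.

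There is, however, a concrete error in your bookkeeping of the initial state. You set $W_j=m+1$ for $j>0$. In this section a white external node is an external position hanging off node $j$, and a node of outdegree $s$ carries $s+1$ external positions \emph{regardless of its indegree}; at the moment node $j$ is inserted its outdegree is $0$, so $W_j=1$ for every $j$ (the paper's first urn starts with one white and $(m+1)j$ blue external nodes, and the proof of Theorem~\ref{Thm:wtilderexpvar} uses the initial condition $W_j\equiv 1$). The $m$ in-edges of node $j$ enter only through the final translation $D_{n,j}^{(m)}=W_n-1+m(1-\delta_{j,0})$ of~(\ref{degreerelation}), not through $W_j$. With your value of $W_j$, the identity $W_n=W_j+m(n-j)-\sum_r b_r$ gives $\sum_r b_r=m(n-j)+2m(1-\delta_{j,0})-d$, which does \emph{not} equal the theorem's $m(n-j+1-\delta_{j,0})-d$ when $j>0$; you assert a match that is off by $m(1-\delta_{j,0})$. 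The same slip makes your stated white and blue counts entering urn $r$ add up to $\tau_{j+r-1}+m(1-\delta_{j,0})$ instead of $\tau_{j+r-1}$, and it would turn your prefactor $\langle W_j\rangle_{W_n-W_j}$ into $\langle m+1\rangle_{\cdot}$, which is not the factorial $(d-m(1-\delta_{j,0}))!$ appearing in the statement. Replacing $W_j$ by $1$ throughout repairs all of these inconsistencies, after which your argument matches the paper's proof.
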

\begin{proof}
	We wait until node $j$ appears in the circuit, then monitor 
	the growth thereafter. Let $X_n$ be the number of blue external nodes in the circuit
	(blue balls in the urn) that appear in the sample of inserting node $n$. This variable has the distribution of the number of blue balls in the \polya-Eggenberger urn used in the $n$th node insertion. 
	We note again, the urns for the different node insertions are different. 
	For the insertion of node $j + 1$, we have an urn starting with one white external node  and $(m + 1)j$ blue external nodes. Thus, for $0 \le b_1 \le \min \{m, (m + 1)j\}$,\footnote{This condition is only needed when $j = 0$, in which cases $b_1$ 
		is $0$.} we have
	$$ \Prob(X_{j + 1} = b_1) = \binom{m}{b_1} \frac{(m - b_1)! \, \langle (m+1)j \rangle_{b_1}}{\langle \tau_j \rangle_m}.$$
	Right before inserting node $j + 2$, one additional blue external node is added to node $j + 1$ (the hiccup mentioned above). Therefore, conditioning on the event $X_{j + 1} = b_1$, before inserting node $j + 2$, we have $1 + m - b_1$ white external nodes and $(m + 1)j + b_1 + 1$ blue external nodes. According to the urn associated with this insertion, we compute
	\begin{align*}
		\Prob(X_{j + 2} = b_2 \given X_{j + 1} = b_1)  &=  \binom{m}{b_2} \frac{\langle  m - b_1 + 1 \rangle_{m - b_2} }{\langle \tau_{j + 1} \rangle_m}
		\\ &\qquad\qquad{}\times \langle (m + 1) j+ b_1 + 1 \rangle_{b_2}.
	\end{align*}
	So, we have the joint distribution of the number of external nodes that appear in the first two insertions:
	\begin{align*}
		&\Prob( X_{j + 1} = b_1, X_{j + 2} = b_2) = \Prob(X_{j+2} = b_2 \given X_{j + 1} = b_1)\, \Prob(X_{j + 1} = b_1) \\
		&\qquad = \binom{m}{b_1} \frac{(m - b_1)! \, \langle (m+1)j \rangle_{b_1}}{\langle \tau_j \rangle_m} \\
		&\qquad\qquad\qquad {} \times \binom{m}{b_2} \frac{\langle  m - b_1 + 1 \rangle_{m - b_2} \langle (m+1) j+ b_1 + 1 \rangle_{b_2}}{\langle \tau_{j + 1} \rangle_m}
		\\  &\qquad = \binom{m}{b_1} \binom{m}{b_2} \frac{(2m - b_1 - b_2)! \, \langle (m+1)j \rangle_{b_1}}{\langle \tau_j \rangle_m} \\
		&\qquad\qquad\qquad{} \times \frac{\langle (m+1) j+ b_1 + 1 \rangle_{b_2}}{\langle \tau_{j + 1} \rangle_m}.
	\end{align*}
	
	We can get the joint distribution of $X_{j + 1}, X_{j + 2}, X_{j + 3}$, by conditioning on $X_{j + 1}=b_1$ and $X_{j + 2} = b_2$,  then uncondition via the joint distribution of $X_{j + 1}, X_{j + 2}$. We can continue in this fashion to get the joint distribution of $X_{j + 1}, X_{j+2}, \ldots, X_n$. 
	We establish
	\begin{align*}
		\Prob(X_{j + 1} = b_1,  \ldots, X_n = b_{n-j}) &= \Big((n - j)m - \sum_{r = 1}^{n - j} b_r \Big)! \,\left( \prod_{r = 1}^{n - j}\binom{m}{b_r} \right) 
		\\ &\quad {}\times \frac{ \prod_{r = 1}^{n - j} \langle (m + 1)j + \sum_{\ell = 1}^{r - 1} b_\ell + r - 1 \rangle_{b_r} }{\prod_{s = j}^{n - 1}\langle \tau_s \rangle_m}.
	\end{align*}
	The exact probability distribution of the number of blue external nodes (blue balls) at time $n$ follows by summing the joint probabilities over every feasible tuple $(b_1, \ldots, b_{n-j})$ with nonnegative components adding up to $b_1 + \cdots + b_{n - j} + (m + 1)j + (n - j)$, giving us: 
	\begin{align*}
		\Prob(B_{n} = b) &= ((m + 1)n - b)!\, \sum_{b_1 + \cdots +  b_{n - j} = b - mj - n} \left( \prod_{r = 1}^{n - j}\binom{m}{b_r} \right) \\ 
		&\qquad{} \times  \frac{\prod_{r = 1}^{n - j} \langle (m + 1)j + \sum_{\ell = 1}^{r - 1} b_\ell + r - 1 \rangle_{b_r} }{\prod_{r = 0}^{n - j - 1}\langle (m + 1)(j + r) + 1 \rangle_m}. 
	\end{align*}
	The theorem follows by calculating the exact probability distribution when $b = (m + 1)n - d + m(1 - \delta_{j,0})$, which is equivalent to the probability of the event $W_n = d - m(1 - \delta_{j, 0}) + 1$, and $W_n$ translates into $D_{n,j}^{(m)}$ by~(\ref{degreerelation}).
\end{proof}

The distribution function of $D_{n,j}^{(m)}$ is unwieldy. In general, it does not give good insight into the asymptotics associated with short and long term insertions in the circuit. However, we are able to investigate the exact distribution of the degree profile of some particular nodes from simple networks. For instance, when $m = 1$ and $j = 0$, the random variable $D_{n, 0}^{(1)}$ represents the degree of the root of a PORT at time $n$. According to Theorem~\ref{Thm:degreedistribution}, we simplify the probability mass function, and arrive at
$$\Prob(D_{n, 0}^{(1)} = d) = \frac{(2n - d - 1)! \, d}{(n - d)! \, (2n - 1)!! \, 2^{n - d}}.$$
As it is not easy to use the exact distribution to do a calculation of the exact or asymptotic average or variance for larger node labels $j$ or more complicated circuits (i.e.,
larger $m$), we resort to transparent stochastic recurrence techniques. 

Consider $W_n$, the number of white external nodes at time $n$. According to the known results of \polya-Eggenberger Urn schemes (see~\cite{Mahmoud2008}), we have
\begin{align}
	\Expe[W_{n} \given \F_{n - 1,0}] &= \frac{W_{n - 1}}{\tau_{n - 1}}m + W_{n- 1},
	\label{Eq:recdegexp}
	\\ \Var[W_{n} \given \F_{n - 1,0}] &= \frac{W_{n - 1} B_{n - 1} m (m + \tau_{n - 1})}{\tau_{n  - 1}^2 (\tau_{n - 1} + 1)}. 
	\label{Eq:recdegvar}
\end{align}
We want to point out that the total number of external nodes (i.e., $\tau_n$) in the recurrences fixes the problem of one additional blue external node added to the circuit after collecting the sample of parents (after the hiccup). Solving the recurrences, we obtain the exact expectation and variance of $W_{n}$ as stated in the following theorem.
\begin{theorem}
	\label{Thm:wtilderexpvar}
	Let $D_{n,j}^{(m)}$ be the degree of node $j$ from a preferential dynamic attachment circuit with index $m$ at time $n$. For $n \ge j$, we have
	\begin{align*}
		\Expe \big[D_{n,j}^{(m)} \big] &=  \displaystyle \frac{\Gamma (n + 1) \, \Gamma \big( j + \frac{1}{m + 1} \big)}{\Gamma \big( n + \frac{1}{m + 1} \big) \, \Gamma (j + 1)} - 1 + m(1 - \delta_{j,0}),
		\\ 
		\Var\big[D_{n,j}^{(m)} \big] &= \displaystyle \frac{\Gamma(n + 1)}{((m + 1)j + 1) \, \Gamma^2 \big(n + \frac{1}{m + 1} \big) \, \Gamma^2 (j + 1) \, \Gamma \big( n + \frac{2}{m + 1} \big)}\\ 
		&{} \times \Big\{ 2((m + 1)n + 1) \, \Gamma(j + 1) \, \Gamma^2 \Big(n + \frac{1}{m + 1} \Big) \, \Gamma \Big(j + \frac{2}{m + 1}\Big)  \\ 
		&{}-((m + 1)j + 1) \Big[ \Gamma(n + 1) \, \Gamma \Big(n + \frac{2}{m + 1}\Big) \, \Gamma^2 \Big(j + \frac{1}{m + 1}\Big) \\ 
		&{} + \Gamma \Big( n + \frac{2}{m + 1}\Big) \, \Gamma \Big(n + \frac{1}{m + 1} \Big) \, \Gamma(j + 1) \, \Gamma \Big(j + \frac{1}{m + 1}\Big) \Big] \Big\}.
	\end{align*}
\end{theorem}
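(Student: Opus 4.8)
The plan is to work with the variable $W_n$, the number of white external nodes attached to node $j$ after $n$ insertions, and extract the mean and variance of $D_{n,j}^{(m)}$ via the affine relation~(\ref{degreerelation}). First I would solve the first-order linear recurrence for the expectation. Taking another expectation of~(\ref{Eq:recdegexp}) gives $\Expe[W_n] = \bigl(1 + \tfrac{m}{\tau_{n-1}}\bigr)\Expe[W_{n-1}] = \tfrac{\tau_{n-1}+m}{\tau_{n-1}}\Expe[W_{n-1}]$. Since $\tau_{n-1} = (m+1)(n-1)+1$, the multiplier is $\tfrac{(m+1)(n-1)+1+m}{(m+1)(n-1)+1} = \tfrac{(m+1)n+1}{(m+1)n-m} = \tfrac{n+\frac{1}{m+1}}{n-\frac{m}{m+1}}$, wait---rewriting, the multiplier at step $n$ equals $\tfrac{n + \frac{1}{m+1}}{(n-1)+\frac{1}{m+1} + 1}$, i.e. a telescoping ratio of shifted factorials. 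Iterating from the starting condition (which is $W_j = 1$ at time $j$, since node $j$ begins with a single external slot) yields a product of such ratios, which collapses by the functional equation of $\Gamma$ into $\Expe[W_n] = \tfrac{\Gamma(n+1)\,\Gamma(j+\frac{1}{m+1})}{\Gamma(n+\frac{1}{m+1})\,\Gamma(j+1)}$. Subtracting $1$ and adding $m(1-\delta_{j,0})$ per~(\ref{degreerelation}) gives the stated mean.

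Next I would handle the variance. The standard route is to compute $\Expe[W_n^2]$ via $\Var[W_n] = \Expe[\Var[W_n\mid\F_{n-1,0}]] + \Var[\Expe[W_n\mid\F_{n-1,0}]]$, but it is cleaner to set up a recurrence directly for $\Expe[W_n^2]$. From~(\ref{Eq:recdegvar}) and~(\ref{Eq:recdegexp}) one has
\begin{align*}
\Expe[W_n^2\mid\F_{n-1,0}] &= \Var[W_n\mid\F_{n-1,0}] + \bigl(\Expe[W_n\mid\F_{n-1,0}]\bigr)^2 \\
&= \frac{W_{n-1}B_{n-1}\,m(m+\tau_{n-1})}{\tau_{n-1}^2(\tau_{n-1}+1)} + \Bigl(\frac{\tau_{n-1}+m}{\tau_{n-1}}\Bigr)^{\!2} W_{n-1}^2.
\end{align*}
Now I would use the deterministic identity $W_{n-1} + B_{n-1} = \tau_{n-1}$ to eliminate $B_{n-1}$, writing $W_{n-1}B_{n-1} = W_{n-1}\tau_{n-1} - W_{n-1}^2$. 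This turns the recurrence into a genuine linear recurrence in the pair $(\Expe[W_n^2],\Expe[W_n])$: after taking a further expectation, $\Expe[W_n^2]$ depends linearly on $\Expe[W_{n-1}^2]$ and $\Expe[W_{n-1}]$, with the latter already in closed form. Solving this inhomogeneous first-order recurrence (homogeneous multiplier at step $n$ being $\bigl(\tfrac{\tau_{n-1}+m}{\tau_{n-1}}\bigr)^2 - \tfrac{m(m+\tau_{n-1})}{\tau_{n-1}^2(\tau_{n-1}+1)}$, and the forcing term a multiple of $\Expe[W_{n-1}]$) by the usual integrating-factor/summation method, then simplifying the resulting sum of ratios of Gamma functions, and finally plugging into $\Var[D_{n,j}^{(m)}] = \Var[W_n] = \Expe[W_n^2] - (\Expe[W_n])^2$, yields the displayed expression.

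The main obstacle is the bookkeeping in the second step: the homogeneous multiplier for $\Expe[W_n^2]$ is not simply the square of the first-moment multiplier---the subtracted $-\tfrac{m(m+\tau_{n-1})}{\tau_{n-1}^2(\tau_{n-1}+1)}$ term perturbs it---so one must verify that the product of these perturbed multipliers still telescopes into a ratio of Gamma values (it does, essentially because $\bigl(\tfrac{\tau+m}{\tau}\bigr)^2 - \tfrac{m(m+\tau)}{\tau^2(\tau+1)} = \tfrac{(\tau+m)(\tau+1+m)}{\tau(\tau+1)}$, a clean shifted-factorial ratio once $\tau = (m+1)(n-1)+1$ is substituted, which is why the $\Gamma(n+\frac{1}{m+1})$ and $\Gamma(n+\frac{2}{m+1})$ factors both appear). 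After that algebraic collapse, the remaining summation for the particular solution telescopes as well, and matching to the stated closed form---clearing denominators and grouping the $\Gamma$-products---is routine but lengthy; I would check the boundary case $n = j$ (variance zero) and the PORT case $m=1$, $j=0$ against the explicit pmf derived earlier as sanity checks.
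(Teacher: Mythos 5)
Your proposal is correct and takes essentially the same route as the paper: both derive first-order recurrences for the mean and a second-order moment of $W_n$ from the conditional urn moments~(\ref{Eq:recdegexp})--(\ref{Eq:recdegvar}), eliminate $B_{n-1}$ via $W_{n-1}+B_{n-1}=\tau_{n-1}$, and telescope into Gamma ratios --- the paper recurs on $\Var[W_n]$ via the law of total variance rather than on $\Expe[W_n^2]$, but the homogeneous multiplier $\frac{(\tau+m)(\tau+m+1)}{\tau(\tau+1)}$ you isolate is exactly the coefficient of $\Var[W_{n-1}]$ in the paper's recurrence. (One minor slip: since $\tau_{n-1}=(m+1)(n-1)+1$, the mean multiplier is $\frac{(m+1)n}{(m+1)n-m}=\frac{n}{(n-1)+\frac{1}{m+1}}$, not $\frac{(m+1)n+1}{(m+1)n-m}$; your telescoped product and final Gamma ratio are nonetheless the correct ones.)
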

\begin{proof}
	We consider the external nodes in preferential attachment circuits. We obtain the recurrence for $\Expe[W_n]$ by taking another expectation of~(\ref{Eq:recdegexp}). In addition to the initial condition $W_j \equiv 1$, we get
	$$ \Expe[W_{n}] = \displaystyle \frac{\Gamma (n + 1) \, \Gamma \big( j + \frac{1}{m + 1} \big)}{\Gamma \big( n + \frac{1}{m + 1} \big) \, \Gamma (j + 1)}.$$
	According to the law of total variance (based on equations~(\ref{Eq:recdegexp}) and~(\ref{Eq:recdegvar})), and the result of $\Expe[W_n]$, we have
	\begin{align*}
		\Var[W_{n}] &= \Expe \big[\Var[W_{n - 1} \given \F_{n - 1,0}] \big] + \Var \big[ \Expe[W_{n - 1} \given \F_{n - 1,0} ] \big]
		\\ &= \frac{((m + 1)n + 1) n (m + 1)}{((m + 1)n - m + 1) ((m + 1)n - m)} \Var[W_{n - 1}] 
		\\ &\quad {} - \frac{\Gamma \big(j + \frac{1}{m + 1} \big) \, \Gamma(n) - \Gamma(j + 1) \, \Gamma \big(n - \frac{m}{m + 1} \big)((m + 1)n - m)}{((m + 1)n - m)^2 ((m + 1)n - m + 1) \, \Gamma^2 \big(n - \frac{m}{m + 1} \big) \Gamma(j + 1)^2}
		\\ &\quad\quad{} \times mn(m + 1) \, \Gamma \Big(j + \frac{1}{m + 1} \Big) \, \Gamma(n).
	\end{align*}
	The exact variance of $W_{n}$ is obtained by solving the variance recurrence above with the initial condition $\Var[W_j] = 0$.
	
	Finally, both $\Expe[W_n]$ and $\Var[W_n]$ translate to $\Expe \big[D_{n,j}^{(m)} \big]$ and $\Var \big[D_{n,j}^{(m)} \big]$ via relation~(\ref{degreerelation}).
\end{proof}

\begin{cor}
	\label{Cor:wtilderasyexpvar}
	As $n \to \infty$, the asymptotic expectation and variance of $D_{n,j}^{(m)}$ are
	\begin{align*}
		\Expe\big[D_{n,j}^{(m)} \big] &\sim \displaystyle \frac{\Gamma \big(j + \frac{1}{m + 1} \big)}{\Gamma(j + 1)} \, n^{\frac{m}{m + 1}},
		\\ \Var \big[D_{n,j}^{(m)} \big] &\sim \Big( \displaystyle \frac{2(m + 1) \, \Gamma \big(j + \frac{2}{m + 1}\big)}{((m + 1)j + 1) \, \Gamma(j + 1)} - \frac{\Gamma^2 \big(j + \frac{1}{m + 1}\big)}{\Gamma^2 (j + 1)} \Big) \, n ^{\frac{2m}{m + 1}}.
	\end{align*}
	\begin{proof}
		The asymptotic expectation and variance of $D_{n,j}^{(m)}$ are obtained by applying {\em Stirling's approximation} to the exact expectation and variance of $D_{n,j}^{(m)}$ in Theorem~\ref{Thm:wtilderexpvar}.
	\end{proof}
\end{cor}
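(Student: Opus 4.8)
The plan is to derive both asymptotics straight from the closed forms in Theorem~\ref{Thm:wtilderexpvar} by feeding them through Stirling's approximation in the ratio form
$$\frac{\Gamma(n+a)}{\Gamma(n+b)}\sim n^{a-b}\qquad(n\to\infty),$$
valid for fixed real $a,b$ (equivalently $\Gamma(n+a)\sim\Gamma(n)\,n^{a}$). Since $j$ and $m$ are held fixed, every $\Gamma$-factor whose argument involves only $j$ and $m$ is a constant in this limit, so the whole computation reduces to bookkeeping of powers of $n$ together with the recurring factor $\Gamma(n)$.

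For the mean the estimate is immediate: in
$$\Expe\big[D_{n,j}^{(m)}\big]=\frac{\Gamma(n+1)\,\Gamma\big(j+\tfrac{1}{m+1}\big)}{\Gamma\big(n+\tfrac{1}{m+1}\big)\,\Gamma(j+1)}-1+m(1-\delta_{j,0}),$$
the ratio $\Gamma(n+1)/\Gamma(n+\tfrac{1}{m+1})$ is asymptotic to $n^{1-1/(m+1)}=n^{m/(m+1)}$, which diverges because $m\ge1$, so the bounded term $-1+m(1-\delta_{j,0})$ is absorbed and one obtains $\Expe[D_{n,j}^{(m)}]\sim\big(\Gamma(j+\tfrac{1}{m+1})/\Gamma(j+1)\big)\,n^{m/(m+1)}$.

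For the variance I would first pull out the common prefactor
$$P_n=\frac{\Gamma(n+1)}{((m+1)j+1)\,\Gamma^2\big(n+\tfrac{1}{m+1}\big)\,\Gamma^2(j+1)\,\Gamma\big(n+\tfrac{2}{m+1}\big)},$$
whose $n$-dependent part behaves like $\Gamma(n)^{-2}\,n^{1-4/(m+1)}$ up to a $j$-constant, and then estimate the three summands inside the brace one at a time. The summand with $((m+1)n+1)\,\Gamma^2\big(n+\tfrac{1}{m+1}\big)$ and the summand with $\Gamma(n+1)\,\Gamma\big(n+\tfrac{2}{m+1}\big)$ are both of order $\Gamma(n)^2\,n^{1+2/(m+1)}$, whereas the remaining summand, the one carrying $\Gamma\big(n+\tfrac{2}{m+1}\big)\,\Gamma\big(n+\tfrac{1}{m+1}\big)$, is only of order $\Gamma(n)^2\,n^{3/(m+1)}$, which is of strictly smaller order since $1+\tfrac{2}{m+1}>\tfrac{3}{m+1}$ for every $m\ge1$; hence it drops out. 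Adding the leading parts of the first two summands and multiplying by $P_n$, the factor $\Gamma(n)^2$ cancels, the exponents add to $\big(1-\tfrac{4}{m+1}\big)+\big(1+\tfrac{2}{m+1}\big)=2-\tfrac{2}{m+1}=\tfrac{2m}{m+1}$, and the surviving $j$-constants collapse to $\dfrac{2(m+1)\,\Gamma\big(j+\frac{2}{m+1}\big)}{((m+1)j+1)\,\Gamma(j+1)}-\dfrac{\Gamma^2\big(j+\frac{1}{m+1}\big)}{\Gamma^2(j+1)}$, which is the claimed leading coefficient.

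The one delicate point, and the only thing I would call an obstacle, is that those two dominant summands in the variance brace share the order $\Gamma(n)^2\,n^{1+2/(m+1)}$ but carry opposite signs, so a priori their leading coefficients might cancel, which would destroy the exponent $2m/(m+1)$. Hence one must verify that $C_{j,m}:=\dfrac{2(m+1)\,\Gamma\big(j+\frac{2}{m+1}\big)}{((m+1)j+1)\,\Gamma(j+1)}-\dfrac{\Gamma^2\big(j+\frac{1}{m+1}\big)}{\Gamma^2(j+1)}$ is strictly positive. For $j\ge1$ this follows from strict log-convexity of $\Gamma$, which gives $\Gamma(j)\,\Gamma\big(j+\tfrac{2}{m+1}\big)>\Gamma^2\big(j+\tfrac{1}{m+1}\big)$; multiplying through by $(m+1)j$, using $\Gamma(j+1)=j\,\Gamma(j)$ and $2(m+1)j\ge(m+1)j+1$, yields $C_{j,m}>0$. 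The case $j=0$ is handled directly (for instance through the Legendre duplication formula). With $C_{j,m}>0$ in hand, both $\sim$ relations are legitimate, and everything else is routine asymptotic expansion.
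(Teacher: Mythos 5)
Your proof is correct and takes essentially the same route as the paper, whose entire argument is to apply Stirling's approximation (in the form $\Gamma(n+a)/\Gamma(n+b)\sim n^{a-b}$) to the exact expressions of Theorem~\ref{Thm:wtilderexpvar}; your power-of-$n$ bookkeeping for both the mean and the variance matches what that yields. The one thing you add beyond the paper's one-line proof is the verification, via strict log-convexity of $\Gamma$ (and the duplication formula for $j=0$), that the leading coefficient of the variance is strictly positive despite the two dominant summands having opposite signs --- a legitimate and worthwhile check that the paper leaves implicit.
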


We have an {\em exact} expression for the mean, and we can analyze it asymptotically for different regimes of $j$, whereupon we discover ``phases changes.'' For $j = j_n$, $n \to \infty$, the degree is not asymptotically much different from the case of fixed $j$, though we can simplify the gamma functions involving~$j_n$ via Stirling's approximation. For such slowly growing node labels $j_n$ (e.g., $j_n = \lfloor \sqrt{n} + 3 \rfloor$), we have
$$ \Expe \big[D_{n,j}^{(m)}\big] \sim \Big(\frac{n}{j_n}\Big)^{\frac{m}{m + 1}}.$$
If $j_n$ grows with $n$ in such a way that $j_n / n \to \theta$,
with $0 < \theta \le 1$  (e.g., $j_n = \lceil \frac 1 6 n + 3\ln n \rceil$), we have
$$ \Expe \big[D_{n,j}^{(m)}\big] \sim \frac{1}{\theta^{\frac{m}{m + 1}}} - 1 + m. $$
Note something special about the case $\theta = 1$, 
(e.g., $j_n = \lfloor n - 2 \ln \ln n\rfloor$),
where we get $\Expe \big[D_{n,j}^{(m)}\big] \sim m$. These very late arrivals join, and on average they do not recruit (outdegree 0). The only contribution to their degree is $m$ ({\em indegree}), coming from the connections to the parents.

\appendix

\section{Appendix section}
\label{Sec:app}

\subsection{Coefficients in Proposition~\ref{Prop:samplemoments}.}
\label{App:coefficients}
\begin{align*}
	\wigC_1 &= 4m(m - 1)(mn - m + n - 2),
	\\ \wigC_2 &= 4m(mn - m + n - 3)(mn - m + n - 2),
	\\ \wigC_3 &= (mn - m + n - 4)(mn - m + n - 3)(mn - m + n - 2),
	\\ \wigC_4 &= 4m(m^2n^2 - m^2n + 2mn^2 + m^2 - 7mn + n^2 + m - 6n + 10),
	\\ \wigC_5 &= 2m(mn - m + n - 2)(2mn - m + 2n - 7).
\end{align*}

\subsection{Exact moment of $\Expe[B^2_n]$ in Proposition~\ref{Prop:secmoments}.}
\label{App:secondmoment}
\begin{align*}
	\Expe[B_n^2] &= \frac{n - 1}{(mn + n - 3)(mn + n - 2)(mn + n - 1)} \left(\frac{4m^2(m + 1)^5n^4}{(3m + 1)^2(2m + 1)^2}\right.
	\\ &\qquad{}+\frac{(m + 1)^3}{(5m + 1)(4m + 1)(3m + 1)^2(2m + 1)^2}
	\\ &\qquad\qquad{}\times \bigl(4(m + 1)m(116m^4 + 47m^3 + 39m^2 + 13m + 1)n^3
	\\ &\qquad\qquad\qquad{}+ (304m^6 - 1772m^5 - 1836m^4 - 1043m^3 
	\\ &\qquad\qquad\qquad\qquad{}- 173m^2 + 7m + 1)n^2\bigr)
	\\ &\qquad{}-\frac{(m + 1)}{(5m + 1)(4m + 1)(3m + 1)^2(3m - 1)(2m + 1)^2}
	\\ &\qquad\qquad{}\times\bigl( (m + 1)(528m^8 + 6100m^7 + 156m^6 + 261m^5 
	\\ &\qquad\qquad\qquad\qquad{}+ 1731m^4 + 1594m^3 + 86m^2 - 83m - 5)n 
	\\ &\qquad\qquad\qquad{}+ 2(144m^9 - 300m^8 - 5182m^7 - 7839m^6 
	\\ &\qquad\qquad\qquad\qquad{}- 6918m^5 - 3483m^4 - 828m^3 + 99m^2 
	\\ &\qquad\qquad\qquad\qquad\qquad\qquad{}+ 58m + 3)\bigr).
\end{align*}




\end{document}